\newtheorem{thm}{Theorem}[section]
\newtheorem{lem}[thm]{Lemma}
\journal{}
\begin{document}

\begin{frontmatter}



\title{Maximum likelihood estimation for a bivariate Gaussian process under fixed domain asymptotics}


\author{Velandia Daira$^{a,c}$\corref{}}
\ead{dvelandia@unitecnologica.edu.co}
\author{Bachoc François$^b$\corref{}}
\ead{francois.bachoc@math.univ-toulouse.fr} 
\author{Bevilacqua Moreno$^c$ \corref{cor1}}
\ead{moreno.bevilacqua@uv.cl}
\author{Gendre Xavier$^b$ \corref{}}
\ead{xavier.gendre@math.univ-toulouse.fr} 
\author{Loubes Jean-Michel$^b$ \corref{}}
\ead{loubes@math.univ-toulouse.fr}

\cortext[cor1]{moreno.bevilacqua@uv.cl}

\address{\small $^a$Facultad de Ciencias Básicas, Universidad Tecnológica de Bolívar, Colombia.\\
	$^b$ Institut de Mathématiques de Toulouse, Université Paul Sabatier, France.\\
	$^c$Instituto de Estadística, Universidad de Valparaíso, Chile.}

\begin{abstract}
	We consider  maximum likelihood estimation with data from a bivariate Gaussian process with a separable exponential covariance model under fixed domain asymptotic. We first characterize the equivalence of Gaussian measures under this model. Then consistency and asymptotic distribution for the microergodic parameters are established. A simulation study is presented in order to compare the finite sample behavior of the maximum likelihood estimator with the given asymptotic distribution.
\end{abstract}

\begin{keyword}
	
	
	Bivariate exponential model; equivalent Gaussian measures; infill asymptotics; microergodic parameters.
\end{keyword}

\end{frontmatter}


\section{Introduction}

Gaussian processes are widely used in statistics to model spatial data. When fitting a Gaussian field, one has to deal with the issue of the estimation of its covariance. In many cases, a model is chosen for the covariance, which turns the problem into a parametric estimation problem. Within this framework, the maximum likelihood estimator (MLE) of the covariance parameters of a Gaussian stochastic process observed in $\mathbb{R}^d$, $d \geq 1$, has been deeply studied in the last years in the two following asymptotic frameworks. \\
\indent The fixed domain asymptotic framework, sometimes called infill asymptotics \cite{Ste1999, Cre1993}, corresponds to the case where more and more data are observed in some fixed bounded sampling domain (usually a region of $\mathbb{R}^d$). The increasing domain asymptotic framework corresponds to the case where the sampling domain  increases with the number of observed data 
and the distance between any two sampling locations is bounded away from $0$. The asymptotic behavior of the MLE of the covariance parameters can be quite different in these two frameworks \cite{ZhaZim2005}. \vskip .1in

Consider first increasing-domain asymptotics. Then, generally speaking, for all (identifiable) covariance parameters, the MLE is consistent and asymptotically normal under some mild regularity conditions. The asymptotic covariance matrix is equal to the inverse of the (asymptotic) Fisher information matrix. This result was first shown by \cite{MarMar1984}, and then extended in different directions by \cite{cressie93asymptotic,cressie96asymptotics,
	shaby12tapered,Bac2014}. 

The situation is significantly different under fixed domain asymptotics. Indeed, two types of covariance parameters can be distinguished: microergodic and non-microergodic parameters \cite{IbrRoz1978,Ste1999}. A covariance parameter is microergodic if, for two different values of it, the two corresponding Gaussian measures are orthogonal, see \cite{IbrRoz1978,Ste1999}. It is non-microergodic if, even for two different values of it, the two corresponding Gaussian measures are equivalent. Non-microergodic parameters can not be estimated consistently, but misspecifying them asymptotically results in the same statistical inference as specifying them correctly \cite{AEPRFMCF,BELPUICF,UAOLPRFUISOS,ZhaZim2005}. On the other hand, it is at least possible to consistently estimate microergodic covariance parameters, and misspecifying them can have a strong negative impact on inference. 

Nevertheless, under fixed domain asymptotics, it has often proven to be challenging to establish the microergodicity or non-microergodicity of covariance parameters, and to provide asymptotic results for estimators of microergodic parameters. Most available results are specific to particular covariance models. When $d=1$ and the covariance model is exponential, only a reparameterized quantity obtained from the variance and scale parameters is microergodic. It is shown in \cite{Yin1991} that the MLE of this microergodic parameter is consistent and asymptotically normal. When $d>1$ and for a separable exponential covariance function, all the covariance parameters are microergodic, and the asymptotic normality of the MLE is proved in \cite{Yin1993}. Other results in this case are also given in \cite{vdV2010,AbtWel1998,CheSimYin2000}. Consistency of the MLE is shown as well in \cite{LohLam2000} for the scale covariance parameters of the Gaussian covariance function and in \cite{Loh2005} for all the covariance parameters of the separable Mat\'ern $3/2$ covariance function. Finally, for the entire isotropic Mat\'ern class of covariance functions, all parameters are microergodic for $d>4$ \cite{And2010}, and only reparameterized parameters obtained from the scale and variance are microergodic for $d \leq 3$ \cite{Zha2004}. In \cite{ShaKau2013}, the asymptotic distribution of MLEs for these microergodic parameters is provided, generalizing previous results in \cite{DuZhaMan2009} and \cite{WanLoh2011}.  \vskip .1in

All the results discussed above have been obtained when considering a univariate stochastic process. There are few results on maximum likelihood in the multivariate setting. Under increasing-domain asymptotics \cite{BevValVel2015} 
extend the results of \cite{MarMar1984} to the bivariate case and consider the asymptotic distribution of the MLE for a large class of bivariate covariance models in order to test the independence between two Gaussian processes. In \cite{furrer15asymptotic}, asymptotic consistency of the tapered MLE for multivariate processes is established, also under increasing domain asymptotics.
In \cite{PasZha2006}, some results are given on the distribution of the MLE of the correlation parameter between the two components of a bivariate stochastic process with a separable structure, when the space covariance is known, regardless of the asymptotic framework. In \cite{lehrke10large}, the fixed domain asymptotic results of \cite{Yin1993} are extended to the multivariate case, for $d=3$ and when the correlation parameters between the different Gaussian processes are known.
Finally, under fixed domain asymptotics, in the bivariate case and when considering an isotropic  Matérn model, \cite{ZhaCai2015} show which covariance parameters are microergodic. 
\vskip .1in

In this paper, we will extend the results of \cite{Yin1991} (when $d=1$ and the covariance function is exponential) to the bivariate case. First we will consider the equivalence of Gaussian measures, that is to say we will characterize which covariance parameters are microergodic. In the univariate case, \cite{AbtWel1998} characterize the equivalence of Gaussian measures  with exponential covariance function using the entropy distance criteria. We extend their approach to the bivariate case. It turns out, similarly as in the univariate case, that not all covariance parameters are microergodic. Hence not all covariance parameters can be consistently estimated. Then we establish the consistency and the asymptotic normality of the MLE of the microergodic parameters. Some our proof methods are natural extensions of those of \cite{Yin1991} in the univariate case, while others are specific to the bivariate case. \vskip .1in

The paper falls into the following parts. In Section \ref{sec:ind} we characterize the equivalence of Gaussian measures, and describe which covariance parameters are microergodic.
In Section \ref{sec:cons}  we establish the strong consistency of the MLE of the microergodic parameters. Section \ref{sec:norm} is devoted to its asymptotic distribution. Some technical lemmas are needed in order to prove these results and, in particular, Lemma \ref{lem:cross:CLT} is essential to prove the asymptotic normality results. The proofs of the technical lemmas are postponed to the appendix.
Section \ref{sec:simulation} provides a simulation study that shows how well the given 
asymptotic distributions apply to finite sample cases. The final section provides a discussion and open
problems for future research.




\section{Equivalence of Gaussian measures} \label{sec:ind}

First we present some notations used in the whole paper.   If $A=(a_{ij})_{1\leq i\leq k,1\leq j\leq n}$ is a $k\times n$
matrix and $B=(b_{ij})_{1\leq i\leq p,1\leq j\leq q}$ is a $p\times
q$ matrix, then the Kronecker product of the two matrices, denoted by $%
A\otimes B$, is the $kp\times nq$ block matrix%
\begin{equation*}
A\otimes B=%
\begin{bmatrix}
a_{11}B & \dots & a_{1n}B\\
\vdots & \ddots & \vdots\\
a_{k1}B & \dots & a_{kn}B%
\end{bmatrix}%
.
\end{equation*}

In the following, we will consider a stationary zero-mean bivariate Gaussian process observed on fixed compact subset $T$ of $ \mathbb{R}$, ${Z}(s)=\{(Z_{1}(s),Z_{2}(s))^{\top},\, s \in T \}$  with covariance function indexed by a parameter $\psi=(\sigma^{2}_{1},\sigma^{2}_{2},\rho,\theta )^{\top} \in \mathbb{R}^4$, given by  
\begin{eqnarray}
\text{Cov}_{\psi}(Z_{i}(s_l),Z_{j}(s_m))&=&\sigma_{i}\sigma_{j}(\rho +(1-\rho){\bf 1}_{i=j}) e^{-\theta |s_l-s_m|}, \quad i,j=1,2. \label{exp:cov}
\end{eqnarray}
Note that $\sigma_1^{2},\sigma_2^2>0$ are  marginal variances parameters and  $\theta >0$ 
is a  correlation decay parameter.   The quantity $\rho$ with $\vert\rho\vert<1$ is the   so-called colocated correlation parameter \cite{GneKleSch2010},  that expresses the correlation between $Z_1(s)$ and $Z_2(s)$ for each $s$. For  $i=1,2$, the covariance of  the marginal  process $Z_i(s)$ is   $\text{Cov}_{\psi}(Z_{i}(s_l),Z_{i}(s_m))= \sigma_{i}^2e^{-\theta |s_l-s_m|}$.  Such process is  known as the Ornstein-Uhlenbeck process
and it has been widely used to model physical, biological, social, and many other phenomena. 
Denote by $P_{\psi}$ the distribution of the bivariate process $Z$, under covariance parameter $\psi$.
As we consider  fixed domain  asymptotic, the process   ${Z}(s)$ is observed at an increasing number of points on
a compact set $T$. Without loss of generality we consider  $T=[0,1]$ and denote by 
$0 \leq s_{1} < \ldots < s_{n}\leq 1$ the observation points of the process. Let us notice that the points $s_1,\dots,s_n$ are allowed to be permuted when new points are added and that these points are assumed to be dense in $T$ when $n$ tends towards infinity.
The observations can thus be written as  $Z_n=(Z_{1,n}^{\top},Z_{2,n}^{\top})^{\top}$ 
with $Z_{i,n}=(Z_i(s_1),\ldots, Z_i(s_n))^{\top}$ for $i=1,2$.
Hence the observation vector $Z_n$  follows a centered Gaussian distribution  $Z_n \sim N(0,\Sigma (\psi))$  with covariance matrix 
$\Sigma (\psi)=A\otimes R$, given by 
\begin{equation}\label{eq:covmat}
A=\left(
\begin{array}{cc}\sigma_{\textup{\footnotesize ${1}$}}^{2}&\sigma_{\textup{\footnotesize ${1}$}}\sigma_{\textup{\footnotesize ${2}$}}\rho\\
\sigma_{\textup{\footnotesize ${1}$}}\sigma_{\textup{\footnotesize ${2}$}}\rho& \sigma_{\textup{\footnotesize ${2}$}}^{2}
\end{array} \right),\,\,\,  R=\left[e^{-\theta |s_{m}-s_{l}|}\right]_{1\leq m,l\leq n}, \end{equation}
and the
associated likelihood function is given by

\begin{equation}\label{eq:loglik}
f_n(\psi)=(2\pi)^{-n}|\Sigma(\psi)|^{-1/2}e^{-\frac 12{Z_n}\,^\intercal \Sigma (\psi)^{-1}{Z_n}}.
\end{equation}

$ $ \vskip .1in
The aim of this section is to provide a necessary and sufficient condition to warrant equivalence between two Gaussian measures $P_{\psi_1}$ and $P_{\psi_2}$  with
$\psi_i=(\sigma^2_{i,{1}},\sigma^2_{i,{2}},\rho_i,\theta_i )^{\top}$, $i=1,2$.

Specifically let us define  the symmetrized entropy
\begin{equation}\label{div}
I_{n}(P_{\psi_1},P_{\psi_2})= E_{\psi_1} \log
\frac{f_n(\psi_1)}{f_n(\psi_2)}+E_{\psi_2} \log
\frac{f_n(\psi_2)}{f_n(\psi_1)}.
\end{equation} 
We assume in this section that the observation points are the terms of a growing sequence in the sense that, at each step, new points are added to the sampling scheme but none is deleted. This assumption ensures that $I_{n}(P_{\psi_1}, P_{\psi_2})$ is an increasing sequence. Hence we may define the limit $I(P_{\psi_1}, P_{\psi_2})= \lim_{n \rightarrow \infty} I_{n}(P_{\psi_1}, P_{\psi_2})$, possibly infinite.
Then 
$P_{\psi_1}$ and $P_{\psi_2}$ are either equivalent or orthogonal if and only if $I(P_{\psi_1}, P_{\psi_2}) < \infty$ or $I (P_{\psi_1}, P_{\psi_2}) = \infty$ respectively (see Lemma 3 in page 77 of \cite{IbrRoz1978} whose arguments can be immeditaly extended to the multivariate case).
Using this criterion, the following lemma 
characterizes the equivalence of the Gaussian measures $P_{\psi_1}$ and $P_{\psi_2}$.

\begin{lem}
	The two measures $P_{\psi_1}$ and $P_{\psi_2}$ are equivalent on the $\sigma$-algebra generated by
	$\{Z(s), \,s \in T\}$, if and only if 
	$\sigma_{i,{1}}^{2}\theta_{1}=\sigma_{i,{2}}^{2}\theta_{2}$, $i=1,2$ and $\rho_{1}=\rho_{2}$ and orthogonal otherwise.
\end{lem}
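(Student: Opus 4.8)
The plan is to use the symmetrized-entropy criterion recalled just before the statement: $P_{\psi_1}\sim P_{\psi_2}$ if and only if $I(P_{\psi_1},P_{\psi_2})<\infty$, where $I$ is the increasing limit of $I_n$. So the whole proof reduces to computing $I_n(P_{\psi_1},P_{\psi_2})$ in closed form in terms of the matrices $\Sigma(\psi_1)=A_1\otimes R_1$ and $\Sigma(\psi_2)=A_2\otimes R_2$, and then deciding for which parameter configurations this stays bounded as $n\to\infty$ along a dense sequence of points in $[0,1]$.

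First I would write, from \eqref{eq:loglik}, the standard Gaussian identity
\begin{equation*}
E_{\psi_1}\log\frac{f_n(\psi_1)}{f_n(\psi_2)}=\tfrac12\Bigl(\operatorname{tr}\bigl(\Sigma(\psi_2)^{-1}\Sigma(\psi_1)\bigr)-n +\log\frac{|\Sigma(\psi_2)|}{|\Sigma(\psi_1)|}\Bigr),
\end{equation*}
so that, after symmetrizing, the determinant terms cancel and
\begin{equation*}
I_n(P_{\psi_1},P_{\psi_2})=\tfrac12\operatorname{tr}\Bigl(\Sigma(\psi_1)^{-1}\Sigma(\psi_2)+\Sigma(\psi_2)^{-1}\Sigma(\psi_1)-2I_{2n}\Bigr).
\end{equation*}
Using $\Sigma(\psi_i)=A_i\otimes R_i$ and the mixed-product property of the Kronecker product, $(A_1\otimes R_1)^{-1}(A_2\otimes R_2)=(A_1^{-1}A_2)\otimes(R_1^{-1}R_2)$, this becomes $\tfrac12\operatorname{tr}\bigl((A_1^{-1}A_2)\otimes(R_1^{-1}R_2)+(A_2^{-1}A_1)\otimes(R_2^{-1}R_1)-I_2\otimes I_n\bigr)$, and since $\operatorname{tr}(M\otimes N)=\operatorname{tr}(M)\operatorname{tr}(N)$ everything factorizes into a product of a $2\times2$ trace and an $n\times n$ trace. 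The $n\times n$ traces $\operatorname{tr}(R_1^{-1}R_2)$ and $\operatorname{tr}(R_2^{-1}R_1)$ are exactly the quantities that appear in the univariate exponential problem, so I would invoke the known tridiagonal inverse of the Ornstein--Uhlenbeck covariance matrix $R_i$ (explicit since $R_i$ has entries $e^{-\theta_i|s_l-s_m|}$ on ordered points) to get a clean expression of these traces as sums over consecutive gaps $s_{k+1}-s_k$; this is the same computation carried out in \cite{AbtWel1998,Yin1991}.

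The next step is the asymptotic analysis. The $2\times2$ factors $\operatorname{tr}(A_1^{-1}A_2)$, $\operatorname{tr}(A_2^{-1}A_1)$ are constants in $n$; the dichotomy is driven by the $R$-traces, which, using the OU inverse, split into a part proportional to $n(\theta_2/\theta_1+\theta_1/\theta_2)$ plus a bounded remainder (this is where the density of the points matters — the off-diagonal contributions telescope and stay bounded regardless of the configuration). Collecting terms, $I_n$ has the shape $c_1 n + c_2 + o(1)$, where $c_1$ and $c_2$ are explicit functions of $\sigma^2_{i,j},\rho_i,\theta_i$. Boundedness of $I_n$ then forces $c_1=0$, and I would show by direct algebra that $c_1=0$ is equivalent to the pair of conditions $\sigma^2_{1,1}\theta_1=\sigma^2_{1,2}\theta_2$, $\sigma^2_{2,1}\theta_1=\sigma^2_{2,2}\theta_2$, together with $\rho_1=\rho_2$: the variance--scale products must match marginally (the microergodic quantities inherited from the univariate case, one per component), and the colocated correlation must match exactly (this is the genuinely bivariate part, coming from the $2\times2$ trace factors not reducing unless $A_1$ and $A_2$ are ``aligned''). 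Conversely, when these conditions hold one checks that the surviving remainder $c_2+o(1)$ is finite, so $I<\infty$ and the measures are equivalent.

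The main obstacle I anticipate is the bookkeeping in the $2\times2$ and $n\times n$ trace factorization: one has to be careful that the colocated-correlation constraint $\rho_1=\rho_2$ really is forced and does not get absorbed into the variance conditions. Concretely, $\operatorname{tr}(A_1^{-1}A_2)+\operatorname{tr}(A_2^{-1}A_1)$ multiplies the $O(n)$ part of the $R$-traces, and even if the marginal products $\sigma^2_{i,1}\theta_1=\sigma^2_{i,2}\theta_2$ hold, the coefficient of $n$ vanishes only if in addition the $2\times2$ matrices satisfy a further identity that, after substituting the variance relations, reduces precisely to $\rho_1=\rho_2$. Isolating this requires writing $A_i^{-1}$ explicitly (its determinant is $\sigma^2_{i,1}\sigma^2_{i,2}(1-\rho_i^2)$) and doing the resulting rational-function simplification carefully; I would organize it by first imposing the variance relations and then showing the leftover coefficient of $n$ is a nonzero multiple of $(\rho_1-\rho_2)^2$ (or a similar manifestly signed quantity), which cleanly yields the claimed characterization.
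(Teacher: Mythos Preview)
Your approach is essentially identical to the paper's: both compute $I_n$ via the Kronecker factorization $\Sigma(\psi_i)=A_i\otimes R_i$, reduce to $\operatorname{tr}(A_j^{-1}A_k)\,\operatorname{tr}(R_j^{-1}R_k)$, and evaluate the $R$-traces using the explicit tridiagonal inverse of the Ornstein--Uhlenbeck covariance. Two small remarks: the ambient dimension is $2n$, so the KL identity should read $\operatorname{tr}(\Sigma_2^{-1}\Sigma_1)-2n$ rather than $-n$ (your symmetrized formula with $I_{2n}$ is nonetheless correct); and the paper actually only writes out the sufficiency direction in detail, so your plan to isolate the coefficient $c_1$ of $n$ and argue $c_1=0\iff \theta_1 A_1=\theta_2 A_2$ is in fact more complete---note that the cleanest way to get this equivalence is the eigenvalue/AM--GM observation that for $B=(\theta_1/\theta_2)A_2^{-1}A_1$ with positive eigenvalues $\lambda_1,\lambda_2$ one has $\operatorname{tr}(B)+\operatorname{tr}(B^{-1})=4$ iff $\lambda_1=\lambda_2=1$, which directly yields all three conditions without first assuming the variance relations.
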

\begin{proof} Let us introduce $\Delta_{i}=s_i-s_{i-1}$ for $i=2,\dots,n$ and
	note that
	\begin{equation*}
	\sum_{i=2}^n\Delta_{i} \leq 1\quad\text{and}\quad\lim_{n \rightarrow \infty}\max_{2\leq i\leq n} \Delta_{i}=0.
	\end{equation*}
	
	Let  $R_j=\left[e^{-\theta_j |s_{m}-s_{l}|}\right]_{1\leq m, l\leq n}$, $j=1,2$.
	By expanding (\ref{div}) we find that
	\begin{eqnarray*} \label{con}
		I_{n}(P_{\psi_1},P_{\psi_2})&=& \frac{1}{2}
		\left\{\frac{1}{(1-\rho_{2}^{2})}\left[\frac{\sigma_{1,{1}}^{2}}{\sigma_{1,{ 2}}^{2}}-2\frac{\sigma_{1,{1}} \sigma_{2,{1}} \rho_{1} \rho_{2}} {\sigma_{1, {2}}\sigma_{2, {2}}}
		+\frac{\sigma_{2,{1}}^{2}}{\sigma_{2,{2}}^{2}} \right] 	tr(R_{1}
		R_{2}^{-1})\right.\\
		&&\,\,\,\,+\left.
		\frac{1}{(1-\rho_{1}^{2})}\left[\frac{\sigma_{1,{2}}^{2}}{\sigma_{1,{ 1}}^{2}}-2\frac{\sigma_{1,{2}} \sigma_{2,{2}} \rho_{1} \rho_{2}} {\sigma_{1, {1}}\sigma_{2, {1}}}
		+\frac{\sigma_{2,{2}}^{2}}{\sigma_{2,{1}}^{2}} \right] 	tr(R_{2}
		R_{1}^{-1})
		\right\}-2n.
	\end{eqnarray*}
	If  
	$\sigma_{i,{1}}^{2}\theta_{1}=\sigma_{i,2}^{2}\theta_{2}$ and 
	$\rho_{1}=\rho_{2}$ for $i=1,2$ 
	we obtain	
	\begin{eqnarray*} \label{con_bis}
		I_{n}(P_{\psi_1},P_{\psi_2})&=& \frac{\theta_{2}}{\theta_{1}}tr(R_{1}R_{2}^{-1})+\frac{\theta_{1}}{\theta_{2}}tr(R_{2}R_{1}^{-1})-2n.		
	\end{eqnarray*}
	In order to compute $tr(R_{1}R_{2}^{-1})$ and $tr(R_{2}R_{1}^{-1})$, we use some results in  \cite{AntZag2010}.
	The matrix $R_j$ can be written as follows,
	$$R_{j}=\left(\begin{matrix} 1& e^{-\theta_{j}\Delta_2} & \cdots & e^{-\theta_{j}\sum\limits_{i=2}^n\Delta_{i}} \\
	e^{-\theta_{j}\Delta_2}& 1 &  \cdots& e^{-\theta_{j}\sum\limits_{i=3}^n\Delta_{i}}\\
	\vdots& \vdots & \ddots& \vdots\\
	e^{-\theta_{j}\sum\limits_{i=2}^n\Delta_{i}}&  
	e^{-\theta_{j}\sum\limits_{i=3}^n\Delta_{i}} & \cdots & 1\\
	\end{matrix}\right)$$
	and
	$R_j^{-1}$ can be written as
	$$R_{j}^{-1}=\left(\begin{matrix} \frac{1}{1-e^{-2\theta_{j}\Delta_2}}&\frac{-e^{-\theta_{j}\Delta_2}}{1-e^{-2\theta_{j}\Delta_2}}& 0&\cdots&0\\
	\frac{-e^{-\theta_{j}\Delta_2}}{1-e^{-2\theta_{j}\Delta_2}}& \frac{1}{1-e^{-2\theta_{j}\Delta_2}}+\frac{e^{-2\theta_{j}\Delta_3}}{1-e^{-2\theta_{j}\Delta_3}}&\ddots& \ddots &\vdots\\
	0 & \ddots & \ddots &  & 0\\
	\vdots & \ddots &  & \frac{1}{1-e^{-2\theta_{j}\Delta_{n-1}}}+\frac{e^{-2\theta_{j}\Delta_n}}{1-e^{-2\theta_{j}\Delta_n}}& \frac{-e^{-\theta_{j}\Delta_n}}{1-e^{-2\theta_{j}\Delta_n}}\\
	0& \cdots & 0 & \frac{-e^{-\theta_{j}\Delta_n}}{1-e^{-2\theta_{j}\Delta_n}}& 
	\frac{1}{1-e^{-2\theta_{j}\Delta_n}}
	\end{matrix} \right).$$
	\\
	Since,
	$tr(R_{j}R_{k}^{-1})=\sum\limits_{i=1}^{n}\sum\limits_{m=1}^{n} (R_{j} \otimes R_{k}^{-1})_{im},\,\,j,k=1,2, j \neq k$, we have
	\begin{eqnarray*}
		tr(R_{j}
		R_{k}^{-1})&=&-2\sum_{i=2}^n\frac{e^{-(\theta_{j}+\theta_{k})\Delta_{i}}}{1-e^{-2\theta_{k}\Delta_{i}}}   + \sum_{i=2}^n\frac{1}{1-e^{-2\theta_{k}\Delta_{i}}}+ \sum_{i=3}^n\frac{e^{-2\theta_{k}\Delta_{i}}}{1-e^{-2\theta_{k}\Delta_{i}}}+\frac{1}{1-e^{-2\theta_{k}\Delta_2}}\\
		&=&
		\sum_{i=2}^n\frac{-2e^{-(\theta_{j}+\theta_{k})\Delta_{i}}+1+e^{-2\theta_{k}\Delta_{i}} }{1-e^{-2\theta_{k}\Delta_{i}}}+1\\
		&=&\sum_{i=2}^n \frac{(e^{-\theta_{k}\Delta_{i}}-e^{-\theta_{j}\Delta_{i}})^{2}}{1-e^{-2\theta_{k}\Delta_{i}}}+\sum_{i=2}^n\frac{1-e^{-2\theta_{j}\Delta_{i}}}{1-e^{-2\theta_{k}\Delta_{i}}}+1.
	\end{eqnarray*}		
	
	\noindent Then, we can write $I_{n}(P_{\psi_1},P_{\psi_2})$ as
	\begin{eqnarray*}  \label{con1}
		\label{con2_bis} I_{n}(P_{\psi_1},P_{\psi_2})	&=&\frac{\theta_{2}}{\theta_{1}}\left(\sum_{i=2}^n \frac{(e^{-\theta_{1}\Delta_{i}}-e^{-\theta_{2}\Delta_{i}})^{2}}{1-e^{-2\theta_{2}\Delta_{i}}}+\sum_{i=2}^n\frac{1-e^{-2\theta_{1}\Delta_{i}}}{1-e^{-2\theta_{2}\Delta_{i}}}+1\right)\\
		&&+\frac{\theta_{1}}{\theta_{2}}\left(\sum_{i=2}^n \frac{(e^{-\theta_{2}\Delta_{i}}-e^{-\theta_{1}\Delta_{i}})^{2}}{1-e^{-2\theta_{1}\Delta_{i}}}+\sum_{i=2}^n\frac{1-e^{-2\theta_{2}\Delta_{i}}}{1-e^{-2\theta_{1}\Delta_{i}}}+1\right)-2n.\\
	\end{eqnarray*}	
	For $j,k=1,2$, $j\neq k$, as is obtained by Taylor expansion, since $\max_i\Delta_i$ tends to $0$, we have
	\begin{equation*}
	\max_{2\leq i\leq n}\left\vert\frac{1-e^{-2\theta_{j}\Delta_{i}}}{\Delta_i(1-e^{-2\theta_{k}\Delta_{i}})}-\frac{\theta_j}{\Delta_i\theta_k}\right\vert=O(1)
	\quad\text{and}\quad
	\max_{2\leq i\leq n}\frac{(e^{-\theta_{j}\Delta_{i}}-e^{-\theta_{k}\Delta_{i}})^{2}}{\Delta_i(1-e^{-2\theta_{k}\Delta_{i}})}=O(1)
	\end{equation*}
	Since $\sum_{i}\Delta_i$ tends to $1$,
	\begin{equation*}
	I_{n}(P_{\psi_1},P_{\psi_2})=\frac{\theta_2}{\theta_1}(1+O(1))+\frac{\theta_1}{\theta_2}(1+O(1))
	\end{equation*}
	and
	$I(P_{\psi_1}, P_{\psi_2}) = \lim_{n \rightarrow \infty} I_{n}(P_{\psi_1},P_{\psi_2}) < \infty$.\\
	Then the two Gaussian measures $P_{\psi_1}$ and $P_{\psi_2}$  are equivalent on the $\sigma-$algebra generated by $Z$ if and only if $\sigma_{i,1}^{2}\theta_{1}=\sigma_{i,2}^{2}\theta_{2}$, $i=1,2$, and 	$\rho_{1}=\rho_{2}$. .
\end{proof}
Note that   sufficient conditions for the equivalence of Gaussian measures using a  generalization of the covariance model (\ref{exp:cov}) are given in \cite{ZhaCai2015}. A consequence of the previous lemma is that it is not possible to  estimate consistently all the parameters
individually if the data are observed on a compact set $T$.  However the microergodic parameters
$\sigma_1^2\theta$, $\sigma_2^2\theta$ and $\rho$ are consistently estimable. The following section is devoted to their estimation.

\section{Consistency of the Maximum Likelihood Estimator}\label{sec:cons}
Let $\widehat{\psi}=(\hat{\theta},\hat{\sigma}_{1}^{2},\hat{\sigma}_{2}^{2},\hat{\rho})^{\top}$ be the MLE obtained by maximizing  $f_n(\psi)$ with respect to $\psi$. In the rest of the paper, we will denote by 
$\theta_0$, $\sigma^2_{i0}$, $i=1,2$ and $\rho_0$ the true but unknown parameters that have to be estimated. We let $var = var_{\psi_0}$, $cov = cov_{\psi_0}$ and $\mathbb{E} = \mathbb{E}_{\psi_0}$ denote the variance, covariance and expectation under $P_{\psi_0}$.
In this section, we establish the strong consistency of   $\hat{\rho}$, $\hat{\theta}\hat{\sigma}_{1}^{2}$   and $\hat{\theta}\hat{\sigma}_{2}^{2}$ , that is the MLE of the microergodic parameters.

We first consider an explicit expression for the negative  log-likelihood   function
\begin{equation}\label{log}
l_{n}(\psi)=- 2\log(f_n(\psi))=   2n \log (2\pi)+\log|\Sigma(\psi)|+ Z_n^{\top} \left[ \Sigma
(\psi)\right]^{-1} Z_n.
\end{equation}
The  explicit expression is given in the following lemma whose proof can be found in the appendix.
\begin{lem} \label{lem:expression:likelihood}
	The negative log-likelihood function in Equation  (\ref{log}) can be written as
	
	\begin{eqnarray*}
		l_{n}(\psi)&=&n \left[\log (2\pi)+\log(1-\rho^{2})\right]+\sum_{k=1}^{2} \log
		(\sigma_{k}^{2}) + \sum_{k=1}^{2} \sum_{i=2}^{n} \log
		\left[\sigma_{k}^{2}\left(1-e^{-2\theta \Delta_{i} }\right)\right]
		\\
		&&+\frac{1}{1-\rho^{2}}\left\{ \sum_{k=1}^{2}  \frac{1}{\sigma_{k}^{2}}
		\left(z_{k,1}^{2}+\sum_{i=2}^{n} \frac{\left(z_{k,i}-e^{-\theta
				\Delta_{i}}z_{k,i-1}\right)^{2}}{1-e^{-2\theta \Delta_{i}}}
		\right)\right.\\
		&&\,\,\,\,\,\,\,\,\,\,\,\,\,\,\,\,\,\,\,\,\,\,\,\,\,\,\,\,\,-\left.
		\frac{2\rho}{\sigma_{1}\sigma_{2}}\left(z_{1,1}z_{2,1}+\sum_{i=2}^{n} \frac{\left(z_{1,i}-e^{-\theta
				\Delta_{i}}z_{1,i-1}\right)\left(z_{2,i}-e^{-\theta
				\Delta_{i}}z_{2,i-1}\right)}{1-e^{-2\theta \Delta_{i}}}
		\right) \right\},
	\end{eqnarray*}
	with $z_{k,i}=Z_k(s_i)$  and  $\Delta_{i}=s_{i}-s_{i-1},\,\,i=2,\ldots,n$.\\
\end{lem}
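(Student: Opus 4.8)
The plan is to exploit the Kronecker structure $\Sigma(\psi)=A\otimes R$ so that the log-likelihood separates into a contribution from the $2\times 2$ matrix $A$ (carrying $\sigma_1^2,\sigma_2^2,\rho$) and one from the $n\times n$ Ornstein--Uhlenbeck correlation matrix $R$ (carrying $\theta$ and the spacings $\Delta_i$). Concretely, I would use the identities $(A\otimes R)^{-1}=A^{-1}\otimes R^{-1}$ and $|A\otimes R|=|A|^{n}|R|^{2}$, together with a triangular factorization of $R$ coming from the Markov property of the process: setting $\phi_i=e^{-\theta\Delta_i}$, the vector with entries $\varepsilon_1=z_1$ and $\varepsilon_i=z_i-\phi_i z_{i-1}$ ($i\ge2$) is obtained from $z=(z_1,\dots,z_n)^{\top}$ by a lower bidiagonal matrix $M$ with unit diagonal, and $MRM^{\top}=D:=\mathrm{diag}(1,1-\phi_2^2,\dots,1-\phi_n^2)$. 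Hence $R=M^{-1}DM^{-\top}$, which yields at once $|R|=\prod_{i=2}^{n}(1-e^{-2\theta\Delta_i})$ (since $\det M=1$), the tridiagonal form of $R^{-1}=M^{\top}D^{-1}M$ already recorded in the proof of the lemma in Section~\ref{sec:ind}, and, for any $x,y\in\mathbb{R}^n$,
\[
x^{\top}R^{-1}y=(Mx)^{\top}D^{-1}(My)=x_1y_1+\sum_{i=2}^{n}\frac{(x_i-\phi_i x_{i-1})(y_i-\phi_i y_{i-1})}{1-\phi_i^{2}}.
\]

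For the determinant term, $|A|=\sigma_1^2\sigma_2^2(1-\rho^2)$ is immediate, so
\[
\log|\Sigma(\psi)|=n\log\big(\sigma_1^2\sigma_2^2(1-\rho^2)\big)+2\sum_{i=2}^{n}\log(1-e^{-2\theta\Delta_i}),
\]
and I would simply regroup $n\log(\sigma_1^2\sigma_2^2)+2\sum_{i=2}^{n}\log(1-e^{-2\theta\Delta_i})$ as $\sum_{k=1}^{2}\log(\sigma_k^2)+\sum_{k=1}^{2}\sum_{i=2}^{n}\log\big[\sigma_k^2(1-e^{-2\theta\Delta_i})\big]$ to recover the first terms of the claimed formula, the $n\log(1-\rho^2)$ term and the $(2\pi)$-constant from $f_n$ being collected into the leading bracket.

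For the quadratic form, write $Z_n=(Z_{1,n}^{\top},Z_{2,n}^{\top})^{\top}$ in the two component blocks, so that
\[
Z_n^{\top}\Sigma(\psi)^{-1}Z_n=\sum_{k,l=1}^{2}(A^{-1})_{kl}\,Z_{k,n}^{\top}R^{-1}Z_{l,n},
\]
and substitute the explicit entries $(A^{-1})_{11}=\tfrac{1}{\sigma_1^2(1-\rho^2)}$, $(A^{-1})_{22}=\tfrac{1}{\sigma_2^2(1-\rho^2)}$, $(A^{-1})_{12}=(A^{-1})_{21}=\tfrac{-\rho}{\sigma_1\sigma_2(1-\rho^2)}$ of the $2\times2$ inverse, together with the bilinear identity above applied to the three pairs $(Z_{1,n},Z_{1,n})$, $(Z_{1,n},Z_{2,n})$, $(Z_{2,n},Z_{2,n})$. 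This produces exactly the $\tfrac{1}{1-\rho^2}\{\dots\}$ expression in the statement with $z_{k,i}=Z_k(s_i)$, the factor $2$ in front of the $\rho$-term arising from the symmetry $(A^{-1})_{12}=(A^{-1})_{21}$. Adding the determinant and quadratic contributions into $l_n(\psi)=-2\log f_n(\psi)$ then gives the result.

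The derivation is essentially bookkeeping: the three ingredients — the Kronecker identities, the Markov factorization of $R$, and the $2\times2$ inverse of $A$ — are all elementary or already available from Section~\ref{sec:ind}, so I do not expect any genuine obstacle. The only point requiring a little care is keeping the index ranges straight when peeling off the boundary term ($z_{k,1}^2$, respectively $z_{1,1}z_{2,1}$, corresponding to the $D_{11}=1$ entry) from the sums over $i\ge 2$.
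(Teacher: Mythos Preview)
Your proposal is correct and follows essentially the same route as the paper: both use the Kronecker identities $|A\otimes R|=|A|^{n}|R|^{2}$ and $(A\otimes R)^{-1}=A^{-1}\otimes R^{-1}$, then reduce the remaining work to the explicit $2\times 2$ inverse of $A$ together with the formula for $x^{\top}R^{-1}y$ in terms of the innovations $z_{k,i}-e^{-\theta\Delta_i}z_{k,i-1}$. The only cosmetic difference is that where the paper cites Lemma~1 of \cite{Yin1993} for $|R|$ and for the bilinear form $Z_{k,n}^{\top}R^{-1}Z_{l,n}$, you supply the same facts directly via the bidiagonal/Markov factorization $MRM^{\top}=D$; this is the same computation made self-contained rather than a different idea.
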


The following theorem uses Lemma \ref{lem:expression:likelihood} in order to establish strong consistency of MLE of 
the microergodic parameters $\rho$, $\theta \sigma_{1}^{2}$, $\theta \sigma_{2}^{2}$.

\begin{thm} \label{thm:consistency}
	Let $J = (a_{\theta},
	b_{\theta})\times (a_{\sigma_{1}},b_{\sigma_{1}}) \times (a_{\sigma_{2}},b_{\sigma_{2}})  \times (a_{\rho},b_{\rho})$, with $0 < a_{\theta}\leq \theta_0 \leq b_{\theta}<\infty,\,\,0 < a_{\sigma_{1}}\leq \sigma_{01}^2 \leq b_{\sigma_{1}}<\infty, \,\,0 < a_{\sigma_{2}}\leq  \sigma_{02}^2 \leq b_{\sigma_{2}}<\infty $ and $-1 < a_{\rho}\leq \rho_0 \leq  b_{\rho}<1$.
	Define
	$\widehat{\psi}=(\hat{\theta},\hat{\sigma}_{1}^{2},\hat{\sigma}_{2}^{2},\hat{\rho})$
	as the minimum of the negative  log-likelihood estimator, solution of
	\begin{equation}\label{eq1}
	l_{n}(\widehat{\psi})=
	\underset{{\psi} \in
		J}{\min}\,\,l_{n} (\psi).
	\end{equation}
	Then, with probability one,
	$\widehat{\psi}$
	exist for $n$ large enough and when $n \rightarrow + \infty$
	\begin{eqnarray}\label{eq2}
	\hat{\rho} &\stackrel{a.s}{\longrightarrow}&\rho_{0},\,\,\,\,\,\\\label{eq3}
	\hat{\theta}\hat{\sigma}_{1}^{2} &\stackrel{a.s}{\longrightarrow}&
	\theta_{0}\sigma_{01}^{2},\,\,\,\,\,\\\label{eq4}
	\hat{\theta}\hat{\sigma}_{2}^{2} &\stackrel{a.s}{\longrightarrow}&
	\theta_{0}\sigma_{02}^{2}.\,\,\,\,\,
	\end{eqnarray}
\end{thm}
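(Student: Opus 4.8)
\emph{Strategy.} My plan is to recast the theorem as a standard M\nobreakdash-estimation statement. By Lemma~\ref{lem:expression:likelihood} the data enter $l_n(\psi)$ only through
\[
S_{kk}(\theta)=z_{k,1}^2+\sum_{i=2}^n\frac{(z_{k,i}-e^{-\theta\Delta_i}z_{k,i-1})^2}{1-e^{-2\theta\Delta_i}}\quad(k=1,2),
\]
and $S_{12}(\theta)$, defined analogously with $(z_{1,i}-e^{-\theta\Delta_i}z_{1,i-1})(z_{2,i}-e^{-\theta\Delta_i}z_{2,i-1})$ in the numerator and $z_{1,1}z_{2,1}$ as the leading term, so that (collecting the $n$ copies of $\log\sigma_k^2$)
\[
l_n(\psi)=n\big[\log(2\pi)+\log(1-\rho^2)+\log\sigma_1^2+\log\sigma_2^2\big]+2\sum_{i=2}^n\log(1-e^{-2\theta\Delta_i})+\frac{1}{1-\rho^2}\Big(\frac{S_{11}(\theta)}{\sigma_1^2}+\frac{S_{22}(\theta)}{\sigma_2^2}-\frac{2\rho\,S_{12}(\theta)}{\sigma_1\sigma_2}\Big).
\]
I would then work with the normalised criterion $\bar l_n(\psi):=n^{-1}l_n(\psi)-\log(2\pi)-2n^{-1}\sum_{i=2}^n\log\Delta_i$ (the subtracted terms are $\psi$-free), prove that it converges $P_{\psi_0}$-a.s.\ and uniformly on the compact set $\overline{J}$ to a deterministic limit $L(\psi)$ that depends on $\psi$ only through the microergodic triple $(\theta\sigma_1^2,\theta\sigma_2^2,\rho)$, and finally check that $L$ is minimised exactly when this triple equals $(\theta_0\sigma_{01}^2,\theta_0\sigma_{02}^2,\rho_0)$.

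\emph{Uniform convergence.} The heart of the argument is to show that, $P_{\psi_0}$-a.s.\ and uniformly in $\theta\in[a_\theta,b_\theta]$, $n^{-1}S_{kk}(\theta)\to\theta_0\sigma_{0k}^2/\theta$ and $n^{-1}S_{12}(\theta)\to\theta_0\rho_0\sigma_{01}\sigma_{02}/\theta$. I would exploit the Markov property of the Ornstein--Uhlenbeck process: under $P_{\psi_0}$ the bivariate innovations $\epsilon_i:=(z_{1,i}-e^{-\theta_0\Delta_i}z_{1,i-1},\,z_{2,i}-e^{-\theta_0\Delta_i}z_{2,i-1})^{\top}$, $i\ge2$, are independent with $\mathbb{E}[\epsilon_i\epsilon_i^{\top}]=(1-e^{-2\theta_0\Delta_i})A_0$, $A_0$ being the matrix $A$ of \eqref{eq:covmat} at $\psi_0$; hence $\mathbb{E}\,S_{kk}(\theta_0)=n\sigma_{0k}^2$ and $\mathrm{var}\,S_{kk}(\theta_0)=O(n)$, which yields $n^{-1}S_{kk}(\theta_0)\to\sigma_{0k}^2$ and $n^{-1}S_{12}(\theta_0)\to\rho_0\sigma_{01}\sigma_{02}$ a.s.\ (the design being a triangular array, this needs some care, as in the technical lemmas of the appendix). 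For a general $\theta$ I would write $z_{k,i}-e^{-\theta\Delta_i}z_{k,i-1}=\epsilon_{k,i}+(e^{-\theta_0\Delta_i}-e^{-\theta\Delta_i})z_{k,i-1}$ and expand the square: using $|e^{-\theta x}-e^{-\theta_0x}|\le|\theta-\theta_0|\,x$, the bound $1-e^{-2\theta x}\ge c\,x$ on $x\in[0,1]$ for $\theta\in[a_\theta,b_\theta]$, the expansion $(1-e^{-2\theta_0\Delta_i})/(1-e^{-2\theta\Delta_i})=(\theta_0/\theta)(1+O(\Delta_i))$ (valid since $\max_i\Delta_i\to0$), the a.s.\ bound $\sum_{i=2}^n\Delta_i z_{k,i-1}^2\le\sup_{s\in T}Z_k(s)^2<\infty$, and a Cauchy--Schwarz estimate for the cross term, one gets $n^{-1}S_{kk}(\theta)=(\theta_0/\theta)\,n^{-1}S_{kk}(\theta_0)+o(1)$ a.s.\ uniformly in $\theta$, and likewise for $S_{12}$. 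Finally $\log(1-e^{-2\theta\Delta_i})=\log(2\theta)+\log\Delta_i+\log\{(1-e^{-2\theta\Delta_i})/(2\theta\Delta_i)\}$ with $|\log\{(1-e^{-x})/x\}|\le x/2$ gives $2n^{-1}\sum_{i=2}^n\log(1-e^{-2\theta\Delta_i})=2\log(2\theta)+2n^{-1}\sum_{i=2}^n\log\Delta_i+o(1)$, uniformly in $\theta$. Combining, $P_{\psi_0}$-a.s.\ and uniformly on $\overline{J}$,
\[
\bar l_n(\psi)\ \longrightarrow\ L(\psi):=\log(1-\rho^2)+\log(\theta\sigma_1^2)+\log(\theta\sigma_2^2)+\log4+\frac{1}{1-\rho^2}\Big(\frac{\eta_{01}}{\theta\sigma_1^2}+\frac{\eta_{02}}{\theta\sigma_2^2}-\frac{2\rho\rho_0\sqrt{\eta_{01}\eta_{02}}}{\sqrt{(\theta\sigma_1^2)(\theta\sigma_2^2)}}\Big),
\]
where $\eta_{0k}:=\theta_0\sigma_{0k}^2$; note $L$ indeed depends on $\psi$ only through $(\theta\sigma_1^2,\theta\sigma_2^2,\rho)$.

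\emph{Minimising the limit.} Writing $\eta_k=\theta\sigma_k^2$, $u_k=\eta_{0k}/\eta_k>0$ and $u_1=rt$, $u_2=r/t$ with $r,t>0$, a short computation gives $L(\psi)-L(\psi_0)=\log\frac{1-\rho^2}{1-\rho_0^2}-2\log r+\frac{r}{1-\rho^2}(t+t^{-1}-2\rho\rho_0)-2$. Minimising successively over $t$ (minimum at $t=1$), then over $r$ (minimum at $r=(1-\rho^2)/(1-\rho\rho_0)$), then over $\rho$ (using $(1-\rho\rho_0)^2-(1-\rho^2)(1-\rho_0^2)=(\rho-\rho_0)^2\ge0$) shows that $L(\psi)\ge L(\psi_0)$ for every $\psi\in\overline{J}$, with equality if and only if $\theta\sigma_1^2=\eta_{01}$, $\theta\sigma_2^2=\eta_{02}$ and $\rho=\rho_0$. (One can also see $L(\psi)-L(\psi_0)\ge0$ cheaply: $\bar l_n(\psi)-\bar l_n(\psi_0)=n^{-1}[l_n(\psi)-l_n(\psi_0)]$ has expectation $2n^{-1}\mathrm{KL}(P_{\psi_0}^{(n)}\Vert P_{\psi}^{(n)})\ge0$, so the limit is a nonnegative relative-entropy rate.)

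\emph{Conclusion, and the main difficulty.} Since $l_n$ is continuous and $\overline{J}$ is compact, a minimiser $\widehat\psi$ over $\overline{J}$ exists for every $n$. By the uniform a.s.\ convergence $\bar l_n\to L$ and the identification $\arg\min_{\overline{J}}L=M_0:=\{\psi\in\overline{J}:\theta\sigma_1^2=\eta_{01},\ \theta\sigma_2^2=\eta_{02},\ \rho=\rho_0\}$, a standard M\nobreakdash-estimation argument gives $\mathrm{dist}(\widehat\psi,M_0)\to0$ a.s.; since $\psi\mapsto(\theta\sigma_1^2,\theta\sigma_2^2,\rho)$ is continuous and identically equal to $(\eta_{01},\eta_{02},\rho_0)$ on $M_0$, the limits (\ref{eq2})--(\ref{eq4}) follow, and since $\psi_0\in M_0\cap\mathrm{int}(J)$ the minimiser lies in the open set $J$ for $n$ large. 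I expect the substantial work — and the step genuinely new relative to the univariate case — to be the almost-sure convergence of $n^{-1}S_{kk}(\theta)$ and $n^{-1}S_{12}(\theta)$ \emph{uniformly} in $\theta$, i.e.\ controlling the triangular array of Ornstein--Uhlenbeck innovations and, for the off-diagonal term $S_{12}$, the interaction between the two components, uniformly over $[a_\theta,b_\theta]$. This is the bivariate analogue of the key estimate of \cite{Yin1991}; the subsequent minimisation of $L$ and the M\nobreakdash-estimation wrap-up are routine.
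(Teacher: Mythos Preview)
Your approach is correct and takes a genuinely different route from the paper's. The paper mimics \cite{Yin1991} directly: it works with the \emph{unnormalised} difference $l_n(\psi)-l_n(\widetilde\psi)$ for a fixed $\widetilde\psi\in M_0$ and shows it diverges to $+\infty$ uniformly over $\{\|\psi-\widetilde\psi\|>\epsilon\}$. For the diagonal sums it invokes Ying's Lemma~2 to get $\sum_i\frac{\sigma_{0k}^{2}(1-e^{-2\theta_0\Delta_i})}{\sigma_k^{2}(1-e^{-2\theta\Delta_i})}W_{k,i,n}^{2}=\frac{\sigma_{0k}^{2}\theta_0}{\sigma_k^{2}\theta}(n-1)+\theta^{-1}O(n^{1/2+\alpha})$, and --- crucially --- it never analyses $S_{12}(\theta)$ on its own: it simply bounds the cross term by Cauchy--Schwarz in terms of $S_{11}(\theta)$ and $S_{22}(\theta)$, so the whole argument reduces to univariate estimates already in \cite{Yin1991}.

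Your M-estimation route (normalise by $n$, prove $\bar l_n\to L$ uniformly a.s., minimise $L$ explicitly) is cleaner and more informative: the explicit limit $L$ makes the dependence on the microergodic triple transparent, and the minimisation over $(r,t,\rho)$ with the identity $(1-\rho\rho_0)^2-(1-\rho^2)(1-\rho_0^2)=(\rho-\rho_0)^2$ pins down $\rho$ neatly. The cost is that you must establish the uniform a.s.\ limit of $n^{-1}S_{12}(\theta)$ itself rather than bounding it away; your expansion $z_{k,i}-e^{-\theta\Delta_i}z_{k,i-1}=\epsilon_{k,i}+(e^{-\theta_0\Delta_i}-e^{-\theta\Delta_i})z_{k,i-1}$ together with $(e^{-\theta_0x}-e^{-\theta x})^2/(1-e^{-2\theta x})=O(x)$ and $\sup_{s}|Z_k(s)|<\infty$ handles this, and for the triangular-array SLLN a fourth-moment Borel--Cantelli argument suffices since the $W_{k,i,n}$ are i.i.d.\ standard normal for each $n$. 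One small caveat: your final claim that ``the minimiser lies in the open set $J$ for $n$ large'' does not follow from $\psi_0\in M_0\cap\mathrm{int}(J)$ alone, since $\widehat\psi$ is only forced toward the \emph{set} $M_0\cap\overline J$, which may touch $\partial J$; this does not affect \eqref{eq2}--\eqref{eq4}, but the existence-in-$J$ statement needs either a separate short argument or a weaker reading.
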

\begin{proof}
	The proof follows the guideline of the consistency of the maximum likelihood estimation given in \cite{Yin1991}.
	Hence consistency results given in (\ref{eq2}), (\ref{eq3}) and (\ref{eq4}) hold as long as we can prove that
	there exist $0<d<D< \infty$ such that for every $\epsilon > 0$,  $\psi$ and $\widetilde{\psi}$, with $\|
	\psi-\widetilde{\psi}\|>\epsilon$
	\begin{equation}\label{result}
	\underset{\{\psi \in J ,\: \|
		\psi-\widetilde{\psi}\|>\epsilon \}}{\min}\left\{l_{n}(\psi)-
	l_{n}(\widetilde{\psi})
	\right\}\rightarrow \infty \,\,\, a.s.
	\end{equation}
	
	where
	$\widetilde{\psi}=(\widetilde{\theta},\widetilde{\rho}^{2},\widetilde{\sigma}_{1}^{2},\widetilde{\sigma}_{2}^{2})^{\top} \in
	J$ can be any nonrandom vector such that
	$$
	\widetilde{\rho}=\rho_{0},\quad 
	\widetilde{\theta}\widetilde{\sigma}_{1}^{2}=
	\theta_{0}\sigma_{01}^{2},\quad
	\widetilde{\theta}\widetilde{\sigma}_{2}^{2}=
	\theta_{0}\sigma_{02}^{2}.$$
	In order to simplify our notation, let 
	$W_{k,i,n}=\frac{z_{k,i}-e^{-\theta_{0}\Delta_{i}}z_{k,i-1}
	}{\left[\sigma^{2}_{0k} (1-e^{-2\theta_{0}\Delta_{i}})
	\right]^{\frac{1}{2}}}$, $k=1,2,\,\,i=2,...,n$.
By the Markovian and Gaussian properties of $Z_{1}$ and $Z_{2}$, it
follows that for each $i \geq 2$, $W_{k,i,n}$ is independent of
$\left\{Z_{k,j}, j\leq i-1  \right\},\,\,k=1,2$. Moreover $\left\{W_{k,i,n},
2\leq i \leq n \right\},\,\, k=1,2$  are an i.i.d. sequences of standard Gaussian random variables.
Using Lemma \ref{lem:expression:likelihood} we write,
\begin{eqnarray*}
	l_{n}(\psi)&=&\sum_{k=1}^{2}  \sum_{i=2}^{n} \log \left[\sigma_{k}^{2}\left(1-e^{-2\theta
		\Delta_{i}}\right)\right]+\frac{1}{1-\rho^{2}}\sum_{k=1}^{2} \sum_{i=2}^{n}
	\frac{\left(z_{k,i}-e^{-\theta
			\Delta_{i}}z_{k,i-1}\right)^{2}}{\sigma_{k}^{2}(1-e^{-2\theta\Delta_{i}})}\\&&
	+\frac{2\rho}{(1-\rho^{2})}\sum_{i=2}^{n} \frac{\left(z_{1,i}-e^{-\theta
			\Delta_{i}}z_{1,i-1}\right)\left(z_{2,i}-e^{-\theta
			\Delta_{i}}z_{2,i-1}\right)}{\sigma_{1}\sigma_{2}(1-e^{-2\theta \Delta_{i}})}+n \log (2\pi)+c(\psi,n),
\end{eqnarray*}

with $c(\psi,n)=\sum_{k=1}^{2}  \log \sigma_{k}^{2}+ n \log \left(1-\rho^{2}\right) +\frac{1}{1-\rho^2}\left[\sum_{k=1}^{2}  \frac{z^2_{k,1}}{\sigma^2_{k}} - 2\rho \frac{z_{1,1}z_{2,1}}{\sigma_{1}\sigma_{2}}\right]$
and from the proof of Theorem 1 in~\cite{Yin1991}, uniformly in $0<\theta\leq d_{\theta}$ and  $\sigma_{k}^{2} \in
[a_{\sigma_{k}} ,b_{\sigma_{k}} ]$, $k=1,2:$\\ 
\begin{equation}\label{a}
\sum_{i=2}^{n} \frac{\left(z_{k,i}-e^{-\theta
		\Delta_{i}}z_{k,i-1}\right)^{2}}{\sigma_{k}^{2}(1-e^{-2\theta\Delta_{i}})}=\sum_{i=2}^{n}
\frac{\sigma_{0k}^2(1-e^{-2\theta_{0}\Delta_{i}})}{\sigma_{k}^2(1-e^{-2\theta\Delta_{i}})}W_{k,i,n}^{2}+O(n^{\frac{1}{2}}),\,\, k=1,2.
\end{equation}\\
Moreover, from Cauchy-Schwarz inequality,
\begin{eqnarray}\label{b}
\sum_{i=2}^{n} \frac{\left(z_{1,i}-e^{-\theta
		\Delta_{i}}z_{1,i-1}\right)\left(z_{2,i}-e^{-\theta
		\Delta_{i}}z_{2,i-1}\right)}{\sigma_{1}\sigma_{2}(1-e^{-2\theta \Delta_{i}})}
&\leq&\prod_{k=1}^2\left(\sum_{i=2}^{n}
\frac{\sigma_{0k}^2(1-e^{-2\theta_{0}\Delta_{i}})}{\sigma_{k}^2(1-e^{-2\theta\Delta_{i}})}W_{k,i,n}^{2}+O(n^{\frac{1}{2}})\right)^\frac{1}{2}\nonumber,\\
\end{eqnarray}\\
and from Lemma 2(ii) in \cite{Yin1991} uniformly in $\theta \leq R$ and $\sigma_{k}^{2} \in
[a_{\sigma_{k}} ,b_{\sigma_{k}} ]$, for every $\alpha_k >0$, with $k=1,2$,\\
\begin{equation}\label{c}
\sum_{i=2}^{n}
\frac{\sigma_{0k}^2(1-e^{-2\theta_{0}\Delta_{i}})}{\sigma_{k}^2(1-e^{-2\theta\Delta_{i}})}W_{k,i,n}^{2}
=
\frac{\sigma_{0k}^{2}\theta_{0}}{\sigma_{k}^{2}\theta}(n-1)+\frac{1}{\theta}
O(n^{\frac{1}{2}+\alpha_{k}}),\,\,k=1,2.
\end{equation}\\
Combining (\ref{a}), (\ref{b}) and (\ref{c}), we can write,
\begin{eqnarray*}
	l_{n}(\psi)&\geq&n \log (2\pi)+c(\psi,n)+\sum_{k=1}^{2}  \sum_{i=2}^{n} \log \left[\sigma_{k}^{2}\left(1-e^{-2\theta
		\Delta_{i}}\right)\right]+\frac{1}{1-\rho^{2}}
	O(n^{\frac{1}{2}})\\
	&&+\frac{1}{1-\rho^{2}}\sum_{k=1}^{2} \sum_{i=2}^{n}
	\frac{\sigma_{0k}^2(1-e^{-2\theta_{0}\Delta_{i}})}{\sigma_{k}^2(1-e^{-2\theta\Delta_{i}})}W_{k,i,n}^{2}
	\\
	&&-\frac{2\rho}{1-\rho^{2}}n\prod_{k=1}^2 
	\left(\frac{\sigma_{0k}^{2}\theta_{0}}{\sigma_{k}^{2}\theta}+
	\frac{1}{n\theta}O(n^{\frac{1}{2}+\alpha_{k}})-\frac{1}{n}\frac{\sigma_{0k}^{2}\theta_{0}}{\sigma_{k}^{2}\theta}\right)^{\frac{1}{2}}.
\end{eqnarray*}
Therefore
\begin{eqnarray*}
	l_{n}(\psi)-l_{n}(\widetilde{\psi})&\geq&
	p(\psi,\widetilde{\psi},n)+\sum_{k=1}^{2} \sum_{i=2}^{n} log \left[ \frac{\sigma_{k}^{2}(1-e^{-2\theta
			\Delta_{i}})}{\widetilde{\sigma}_{k}^{2}(1-e^{-2\widetilde{\theta}\Delta_{i}})}\right]\\
	&+& \sum_{k=1}^{2} \sum_{i=2}^{n}
	\left[\frac{1}{1-\rho^{2}} \frac{\sigma_{0k}^{2}(1-e^{-2\theta_{0}
			\Delta_{i}})}{\sigma_{k}^{2}(1-e^{-2\theta\Delta_{i}})}
	-\frac{1}{1-\widetilde{\rho}^{2}}\frac{\sigma_{0k}^{2}(1-e^{-2\theta_{0}\Delta_{i}})}{\widetilde{\sigma}_{k}^{2}(1-e^{-2\widetilde{\theta}\Delta_{i}})}
	\right] W_{k,i,n}^{2}\\
	&-&\frac{2\rho}{1-\rho^{2}}\frac{n}{\theta} \prod_{k=1}^2
	\left(\frac{\sigma_{0k}^{2}\theta_{0}}{\sigma_{k}^{2}}+
	\frac{1}{n}O(n^{\frac{1}{2}+\alpha_{k}})-\frac{1}{n}\frac{\sigma_{0k}^{2}\theta_{0}}{\sigma_{k}^{2}}\right)^{\frac{1}{2}}\\
	&+&\frac{2\widetilde{\rho}}{1-\widetilde{\rho}^{2}}n \prod_{k=1}^2 \left(1+
	\frac{1}{n \widetilde{\theta}}O(n^{\frac{1}{2}+\alpha_{k}})-\frac{1}{n}\right)^{\frac{1}{2}}\\
	&+&\frac{1}{1-\rho^{2}}O(n^{\frac{1}{2}})+\frac{1}{1-\widetilde{\rho}^{2}}O(n^{\frac{1}{2}}),
\end{eqnarray*}
where $p(\psi,\widetilde{\psi},n)=c(\psi,n)-c(\widetilde{\psi},n)$. From lemma 2 in \cite{Yin1991}, for some $M_{k}>0$ and uniformly in $\theta \leq R$ and $\sigma_{k}^{2} \in
[a_{\sigma_{k}} ,b_{\sigma_{k}} ]$, $k=1,2$
\begin{equation}\label{d}
\sum_{i=2}^{n} \log \left[ \frac{\sigma_{k}^{2}(1-e^{-2\theta \Delta_{i}})}{\widetilde{\sigma}_{k}^{2}(1-e^{-2\widetilde{\theta}\Delta_{i}})}\right]\geq
\sum_{i=2}^{n} \log \left(\frac{\theta}{M_{k}}\right)= (n-1) \log
\left(\frac{\theta}{M_{k}}\right),\,\,k=1,2.
\end{equation}
and
\begin{eqnarray} \label{f} 
\sum_{i=2}^{n}
\left[\frac{1}{1-\rho^{2}} \frac{\sigma_{0k}^{2}(1-e^{-2\theta_{0}
		\Delta_{i}})}{\sigma_{k}^{2}(1-e^{-2\theta\Delta_{i}})}\right.
&-&\left.\frac{1}{1-\widetilde{\rho}^{2}}\frac{\sigma_{0k}^{2}(1-e^{-2\theta_{0}\Delta_{i}})}{\widetilde{\sigma}_{k}^{2}(1-e^{-2\widetilde{\theta}\Delta_{i}})}
\right] W_{k,i,n}^{2}\\&=&
(n-1)
\left( \frac{1}{1-\rho^{2}}    \frac{\sigma_{0k}^{2}\theta_{0}}{\sigma_{k}^{2}\theta}-\frac{1}{1-\widetilde{\rho}^{2}}\right)+\theta^{-1}O(n^{\frac{1}{2}+\alpha_{k}}),\,\, k=1,2. \nonumber
\end{eqnarray}
Let $ \widetilde{\rho} = \min \left\{\widetilde{\rho}, \rho \right\}$ and combining (\ref{d}) and (\ref{f}), we can write, 
\begin{eqnarray*}
	l_{n}(\theta,\rho,\sigma_{1}^{2},\sigma_{2}^{2})-l_{n}(\widetilde{\theta},\widetilde{\rho},\widetilde{\sigma}_{1}^{2},\widetilde{\sigma}_{2}^{2})&\geq&
	p(\psi,\widetilde{\psi},n) +  \sum_{k=1}^{2}\frac{n}{\theta
		(1-\rho_{0}^{2})}\left[\frac{\sigma_{0k}^{2}\theta_{0}}{\sigma_{k}^{2}}+O(n^{\gamma_{k}-1})
	\right]\\&&-\sum_{k=1}^{2}\frac{1}{1-\rho_{0}^{2}}\frac{\sigma_{0k}^{2}\theta_{0}}{\sigma_{k}^{2}\theta}+\sum_{k=1}^{2}(n-1)\left[log
	\left(\frac{M_{k}}{\theta}\right)-\frac{1}{1-\rho_{0}^{2}}
	\right]\\
	&&-\frac{2\rho_{0}}{1-\rho_{0}^{2}}\frac{n}{\theta}
	\prod_{k=1}^2\left(\frac{\sigma_{0k}^{2}\theta_{0}}{\sigma_{k}^{2}}+
	\frac{1}{n}O(n^{\frac{1}{2}+\alpha_{k}})-\frac{1}{n}\frac{\sigma_{0k}^{2}\theta_{0}}{\sigma_{k}^{2}}\right)^{\frac{1}{2}}\\
	&&+\frac{2\rho_{0}}{1-\rho^{2}_{0}}n \prod_{k=1}^2\left(1+
	\frac{1}{n \widetilde{\theta}}O(n^{\frac{1}{2}+\alpha_{k}})-\frac{1}{n}\right)^{\frac{1}{2}}.
\end{eqnarray*}
for some $\gamma_{k}<1,\,\,k=1,2$,  where the $O(n^{\gamma_k})$
term is uniform in $\theta \leq R$.\\
Since some $log(\theta)^{-1}=o(\theta^{-1})$ as $\theta \downarrow
0$, we can choose $\delta_{\theta}$ small enough so that for all $\theta
\leq \delta_{\theta}$,
$\frac{\sigma_{0}^{2}\theta_{0}}{2b_{\sigma}\theta}-1-log(\frac{M}{\theta})\geq
\eta$, which implies that with probability 1, \\
\begin{eqnarray*}
	l_{n}(\theta,\rho,\sigma_{1}^{2},\sigma_{2}^{2})-l_{n}(\widetilde{\theta},\widetilde{\rho},\widetilde{\sigma}_{1}^{2},\widetilde{\sigma}_{2}^{2})&\geq&
	p(\psi,\widetilde{\psi},n) +\frac{n}{
		(1-\rho_{0}^{2})}\left\{\frac{1}{\theta}\sum_{k=1}^{2}\frac{\sigma_{0k}^{2}\theta_{0}}{2b_{\sigma_{k}}}+O(n^{\gamma_{k}-1})
	\right.\\
	&&\left.- 2\rho_{0} \left[ \prod_{k=1}^2\frac{1}{\theta}
	\left(\frac{\sigma_{0k}^{2}\theta_{0}}{\sigma_{k}^{2}}+
	O(n^{\alpha_{k}-\frac{1}{2}})-\frac{1}{n}\frac{\sigma_{0k}^{2}\theta_{0}}{\sigma_{k}^{2}}\right)^{\frac{1}{2}}\right. \right.\\
	&&\,\,\,\,\,\,\,\,\,\,\,\,\,\,\,\,\,\,\,\,\,\,\,\,\,\,\,\,\left.- \prod_{k=1}^2\left(1+
	\left.\frac{1}{\widetilde{\theta}}O(n^{\alpha_{k}-\frac{1}{2}})-\frac{1}{n}\right)^{\frac{1}{2}}\right]\right\}.
\end{eqnarray*}
Thus we get (\ref{result}) by letting $d_{\theta}=\delta_{\theta}$.\\
Hence, since $p(\psi,\widetilde{\psi},n) \rightarrow  0$,  

\begin{equation*}
\frac{1}{\theta}\sum_{k=1}^{2}\frac{\sigma_{0k}^{2}\theta_{0}}{2b_{\sigma_{k}}}+O(n^{\gamma_{k}-1})\leq  \infty
\end{equation*}
and
\begin{equation*}
- 2\rho_{0} \left[ \prod_{k=1}^2\frac{1}{\theta}
\left(\frac{\sigma_{0k}^{2}\theta_{0}}{\sigma_{k}^{2}}+
O(n^{\alpha_{k}-\frac{1}{2}})-\frac{1}{n}\frac{\sigma_{0k}^{2}\theta_{0}}{\sigma_{k}^{2}}\right)^{\frac{1}{2}}- \prod_{k=1}^2\left(1+
\frac{1}{\widetilde{\theta}}O(n^{\alpha_{k}-\frac{1}{2}})-\frac{1}{n}\right)^{\frac{1}{2}}\right] \leq  \infty,
\end{equation*}
we prove that
\begin{equation*}
\underset{\{\psi \in J ,\: \|
	\psi-\widetilde{\psi}\|>\epsilon \}}{\min}\left\{l_{n}(\theta,\rho,\sigma_{1}^{2},\sigma_{2}^{2})-
l_{n}(\widetilde{\theta},\widetilde{\rho},\widetilde{\sigma}_{1}^{2},\widetilde{\sigma}_{2}^{2})
\right\}\rightarrow \infty
\end{equation*}
when $n \rightarrow \infty$, uniformly in $\theta \leq \delta_{\theta}$.
\end{proof}

\section{Asymptotic distribution}\label{sec:norm}
Before we state the main result on the MLE  asymptotic distribution, we need to introduce some notation
that will be used throughout this paper.
Because of Theorem \ref{thm:consistency}, there exists a compact subset $\mathcal{S}$  of $(0,+\infty)\times(0,+\infty)\times (0,+\infty)\times(-1,1)$ of the form $\Theta  \times \mathcal{V} \times \mathcal{V}\times \mathcal{R}$, such that a.s. $\hat{\psi}$ belongs to   $\mathcal{S}$ for $n$ large enough. We let $O_u(1)$ denote any real function $g_n(\theta,\rho,\sigma_1^2,\sigma_2^2)$ which satisfies $\sup_{(\theta,\rho,\sigma_1^2,\sigma_2^2) \in \mathcal{S}} | g_n(\theta,\rho,\sigma_1^2,\sigma_2^2)| = O(1)$. For example $\theta \sigma_1/(1-\rho^2) = O_u(1)$.	We also let $O_{up}(1)$ denote any real function $g_n(\theta,\rho,\sigma_1^2,\sigma_2^2,Z_{1,n},Z_{2,n})$ which satisfies $\sup_{(\theta,\rho,\sigma_1^2,\sigma_2^2) \in \mathcal{S}} | g_n(\theta,\rho,\sigma_1^2,\sigma_2^2,Z_{1,n},Z_{2,n}) | = O_p(1)$. For example $z_{1,1} \sigma_1 = O_{up}(1)$.

The following lemma is essential when establishing the asymptotic distribution of the microergodic parameters.

\begin{lem} \label{lem:cross:CLT}
	With the same notations and assumptions as in Theorem \ref{thm:consistency}, let 
	\[L(\theta) = \sum_{i=2}^n \frac{  
		(z_{1,i} - e^{-\theta \Delta_i}z_{1,i-1})
		(z_{2,i} - e^{-\theta \Delta_i}z_{2,i-1})}
	{1-e^{-2 \theta \Delta_i}},\]
	and $G = [\partial/\partial \theta] L(\theta)$.  Let for $n \in \mathbb{N}$ and $i=2,...,n$,
	\begin{equation} \label{eq:def:Yin}
	Y_{i,n} = \frac{\left( z_{1,i} - e^{-\theta_0 \Delta_i} z_{1,i-1} \right) \left( z_{2,i} - e^{-\theta_0 \Delta_i} z_{2,i-1} \right)}{\sigma_{01} \sigma_{02} \sqrt{1+ \rho_0^2} (1-e^{-2 \theta_0 \Delta_i})}.
	\end{equation}
	Then for all $n \in \mathbb{N}$, the $(Y_{i,n})_{i=2,...,n}$ are independent with $\mathbb{E}(Y_{i,n}) = \rho_0 / (1+\rho_0^2)^{1/2}$ and $var(Y_{i,n}) = 1$. Furthermore we have
	\begin{equation*}
	G = -\frac{\sigma_{01} \sigma_{02} \sqrt{1+\rho_0^2} \theta_0}{\theta^2}\sum_{i=2}^n Y_{i,n}
	+ O_{up}(1).
	\end{equation*}
\end{lem}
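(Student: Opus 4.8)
\emph{Proof plan.} I would treat the two assertions separately. For the first one, I would use the Gaussian--Markov structure of the bivariate Ornstein--Uhlenbeck process $Z$. Conditioning on $\mathcal{F}_{i-1}:=\sigma(Z(s_1),\dots,Z(s_{i-1}))$ and applying the Gaussian conditioning formulas to $\text{Cov}_{\psi_0}(Z(s),Z(s'))=A\,e^{-\theta_0|s-s'|}$, with $A$ the $2\times2$ matrix of (\ref{eq:covmat}) at $\psi_0$, shows that $\varepsilon_i:=Z(s_i)-e^{-\theta_0\Delta_i}Z(s_{i-1})$ is $N\!\left(0,(1-e^{-2\theta_0\Delta_i})A\right)$ and independent of $\mathcal{F}_{i-1}$. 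As $\varepsilon_2,\dots,\varepsilon_{i-1}$ are $\mathcal{F}_{i-1}$-measurable, the $\varepsilon_i$, $i=2,\dots,n$, are mutually independent, hence so are the $Y_{i,n}$, each being a function of $\varepsilon_i$ alone. Writing $\varepsilon_i=(\varepsilon_{1,i},\varepsilon_{2,i})$, the normalised pair $(W_{1,i,n},W_{2,i,n})$ with $W_{k,i,n}=\varepsilon_{k,i}/[\sigma_{0k}^2(1-e^{-2\theta_0\Delta_i})]^{1/2}$ (as in the proof of Theorem~\ref{thm:consistency}) is bivariate standard normal with correlation $\rho_0$ and $Y_{i,n}=W_{1,i,n}W_{2,i,n}/\sqrt{1+\rho_0^2}$; consequently $\mathbb{E}(Y_{i,n})=\mathbb{E}(W_{1,i,n}W_{2,i,n})/\sqrt{1+\rho_0^2}=\rho_0/\sqrt{1+\rho_0^2}$, while Isserlis' formula gives $\mathbb{E}(W_{1,i,n}^2W_{2,i,n}^2)=1+2\rho_0^2$, whence $var(Y_{i,n})=(1+2\rho_0^2-\rho_0^2)/(1+\rho_0^2)=1$.

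For the expansion of $G$ I would first compute $[\partial/\partial\theta]L(\theta)$ in closed form. Put $u_i=e^{-\theta\Delta_i}$, $\delta_{k,i}=z_{k,i}-z_{k,i-1}$ and $b_i=z_{1,i}z_{2,i-1}+z_{1,i-1}z_{2,i}$. Differentiating each summand of $L$ and simplifying with the identity $z_{1,i}z_{2,i}+z_{1,i-1}z_{2,i-1}=b_i+\delta_{1,i}\delta_{2,i}$ (which collapses the numerator to $b_i(1-u_i)^2-2u_i\delta_{1,i}\delta_{2,i}$) yields
\[
G=\sum_{i=2}^{n}\frac{\Delta_i u_i\,b_i}{(1+u_i)^2}\;-\;2\sum_{i=2}^{n}\frac{\Delta_i u_i^2\,\delta_{1,i}\delta_{2,i}}{(1-u_i^2)^2}.
\]
The first sum is $O_{up}(1)$: since $u/(1+u)^2\le1/4$ on $(0,1)$, $\sum_i\Delta_i\le1$, and $|b_i|\le2\sup_{[0,1]}|Z_1|\sup_{[0,1]}|Z_2|<\infty$ a.s., it is at most $\tfrac12\sup_{[0,1]}|Z_1|\sup_{[0,1]}|Z_2|$ in absolute value, uniformly in $\theta$.

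It then remains to handle the second sum. Setting $v_i=1-e^{-\theta_0\Delta_i}$ and $w_{k,i}=z_{k,i}-e^{-\theta_0\Delta_i}z_{k,i-1}=\sigma_{0k}(1-e^{-2\theta_0\Delta_i})^{1/2}W_{k,i,n}$, one has $\delta_{k,i}=w_{k,i}-v_iz_{k,i-1}$, hence $\delta_{1,i}\delta_{2,i}=w_{1,i}w_{2,i}-v_i(w_{1,i}z_{2,i-1}+w_{2,i}z_{1,i-1})+v_i^2z_{1,i-1}z_{2,i-1}$, and I would split the second sum into three pieces accordingly. For the leading piece, a Taylor expansion in $\Delta_i$ (legitimate since $\max_i\Delta_i\to0$) shows that $r_i(\theta):=2\Delta_i u_i^2/(1-u_i^2)^2-\theta_0/[\theta^2(1-e^{-2\theta_0\Delta_i})]$ is bounded uniformly in $\theta\in\Theta$, $i$ and $n$ (both terms equal $(2\theta^2\Delta_i)^{-1}+O(1)$), whereas $\sum_i|w_{1,i}w_{2,i}|=O_p(1)$ because $\mathbb{E}|w_{1,i}w_{2,i}|\le\sigma_{01}\sigma_{02}(1-e^{-2\theta_0\Delta_i})$ and $\sum_i\Delta_i\le1$; since moreover $w_{k,i}/[\sigma_{0k}(1-e^{-2\theta_0\Delta_i})^{1/2}]=W_{k,i,n}$ and $W_{1,i,n}W_{2,i,n}=\sqrt{1+\rho_0^2}\,Y_{i,n}$, this gives
\[
-2\sum_{i=2}^{n}\frac{\Delta_i u_i^2}{(1-u_i^2)^2}\,w_{1,i}w_{2,i}
=-\frac{\sigma_{01}\sigma_{02}\sqrt{1+\rho_0^2}\,\theta_0}{\theta^2}\sum_{i=2}^{n}Y_{i,n}+O_{up}(1).
\]
The piece arising from $v_i^2z_{1,i-1}z_{2,i-1}$ has coefficient $2\Delta_i u_i^2v_i^2/(1-u_i^2)^2=O(\Delta_i)$ uniformly in $\theta$, so it is bounded by $O(1)\sup_{[0,1]}|Z_1|\sup_{[0,1]}|Z_2|=O_{up}(1)$.

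The delicate piece is the middle one, $\sum_i g_i(\theta)w_{k,i}z_{k',i-1}$ with $g_i(\theta)=2\Delta_i u_i^2v_i/(1-u_i^2)^2$, which is $O(1)$ uniformly but not small, so that bounding it in absolute value would only give $O_p(\sqrt n)$. Here the point is that $w_{k,i}$ has mean zero and is independent of $\mathcal{F}_{i-1}$: all cross terms vanish, so $\mathbb{E}(\sum_i g_i w_{k,i}z_{k',i-1})=0$ and $var(\sum_i g_i w_{k,i}z_{k',i-1})=\sum_i g_i^2\,\mathbb{E}(w_{k,i}^2)\mathbb{E}(z_{k',i-1}^2)=O(\sum_i\Delta_i)=O(1)$; the same estimates applied to $\partial g_i/\partial\theta$ (also $O(1)$ uniformly) control the oscillation in $\theta$ and yield $\sup_{\theta\in\Theta}|\sum_i g_i(\theta)w_{k,i}z_{k',i-1}|=O_p(1)$, i.e.\ $O_{up}(1)$ --- an estimate of the same nature as Lemma~2 in \cite{Yin1991}. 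Collecting the four contributions gives the announced formula for $G$, and this uniform-in-$\theta$ control of the middle sum is where the main difficulty lies.
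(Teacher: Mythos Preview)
Your argument is correct and follows a different---and in several places neater---route than the paper's. Both proofs handle the moments and independence of the $Y_{i,n}$ in essentially the same way (Gaussian conditioning plus Isserlis), but the treatments of $G$ diverge.

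The paper differentiates $L$ by the quotient rule to obtain $G=G_1-G_2$, where $G_1$ contains the ``mixed'' terms $\Delta_ie^{-\theta\Delta_i}z_{k,i-1}(z_{k',i}-e^{-\theta\Delta_i}z_{k',i-1})/(1-e^{-2\theta\Delta_i})$ and $G_2$ carries the main contribution. It then shows $G_1=O_{up}(1)$ via a split $T_1+T_2$ (the $T_1$ part uses the same orthogonality-of-increments variance computation you use for your ``middle piece''), and in $G_2$ it replaces $e^{-\theta\Delta_i}$ by $e^{-\theta_0\Delta_i}$ in both factors through the telescoping identity $a_\theta b_\theta c_\theta=a_{\theta_0}b_{\theta_0}c_\theta+\cdots$, producing two further remainders $R_1,R_2$ that are again controlled by the martingale variance bound. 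Your route collapses all of this: the algebraic identity $z_{1,i}z_{2,i}+z_{1,i-1}z_{2,i-1}=b_i+\delta_{1,i}\delta_{2,i}$ reduces the derivative to $\sum_i\Delta_iu_ib_i/(1+u_i)^2-2\sum_i\Delta_iu_i^2\delta_{1,i}\delta_{2,i}/(1-u_i^2)^2$, making the first remainder trivially bounded by $\tfrac12\sup|Z_1|\sup|Z_2|$; your subsequent split $\delta_{k,i}=w_{k,i}-v_iz_{k,i-1}$ then yields three transparent pieces rather than the paper's four. The gain is mainly in bookkeeping: fewer remainder terms to track, and the $\theta$-dependence is isolated in scalar coefficients from the outset.

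On the one point you flag as delicate---upgrading the pointwise $O_p(1)$ bound on $\sum_i g_i(\theta)w_{k,i}z_{k',i-1}$ to a bound uniform in $\theta\in\Theta$---your sketch via $\partial g_i/\partial\theta=O(1)$ is the right idea and can be made rigorous by, e.g., writing $\sup_\theta|M(\theta)|\le|M(\theta_*)|+\int_\Theta|M'(s)|\,ds$ and bounding $\mathbb{E}|M'(s)|$ by the same variance computation. The paper is, if anything, less explicit at the corresponding step (it passes directly from $\sup_\theta\mathbb{E}(T_1^2)\le C$ to $T_1=O_{up}(1)$), so your level of detail here is at least on par.
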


Using the previous lemma, the following theorem establishes the asymptotic distribution of the MLE of the microergodic parameters.
Specifically we consider three cases: first when both the colocated correlation and variance
parameters are known, second when only the variance parameters are known and third
when all the microergodic parameters are unknown.

\begin{thm} 
	With the same notation and assumptions as in Theorem \ref{thm:consistency}, 
	if $a_{\sigma_{k}}=b_{\sigma_{k}}=\sigma_{0k}^{2}=\hat{\sigma}^{2}_k$ for $k=1,2$, $a_{\rho}=b_{\rho}=\rho_{0}=\hat{\rho}$ and 
	$a_{\theta}< \theta_{0}< b_{\theta}$
	then
	\begin{equation}
	\sqrt{n}(\hat{\theta}-\theta_{0})\stackrel{\mathcal{D}}{\longrightarrow}
	\mathcal{N}
	\left(
	0, \theta_{0}^{2}
	\right).\label{dotto}
	\end{equation}\\
	If $a_{\sigma_{k}}=b_{\sigma_{k}}=\sigma_{0k}^{2}=\hat{\sigma}^{2}_{k}$ for $k=1,2$, $a_{\rho}< \rho_{0}< b_{\rho}$   and $a_{\theta}< \theta_{0}< b_{\theta}$, then\\
	\begin{equation}\label{res2}
	\sqrt{n}\left(\begin{matrix}\hat{\theta}-\theta_{0}\\
	\hat{\rho}-\rho_{0}
	\end{matrix}\right)\stackrel{\mathcal{D}}{\longrightarrow}\mathcal{N}
	\left(
	0,\Sigma_{\theta \rho}
	\right),
	\end{equation}\\
	where $\Sigma_{\theta \rho}=\left(\begin{matrix}\theta_{0}^{2}(1+\rho_{0}^{2})&\theta_{0}\rho_{0}(1-\rho_{0}^{2})\\
	\theta_{0}\rho_{0}(1-\rho_{0}^{2})& (\rho_{0}^{2}-1)^{2} \end{matrix}\right).$\\\\	
	Finally, if $a_{\sigma_{k}}< \sigma_{0k}^{2} < b_{\sigma_{k}}$ for $k=1,2$, $a_{\rho}< \rho_{0}< b_{\rho}$ and $a_{\theta}< \theta_{0}< b_{\theta}$, then
	\begin{equation}\label{res3}
	\sqrt{n}\left(\begin{matrix}\hat{\sigma}_{1}^{2}\hat{\theta}-\sigma_{01}^{2}\theta_{0}\\
	\hat{\sigma}_{2}^{2}\hat{\theta}-\sigma_{02}^{2}\theta_{0}\\
	\hat{\rho}-\rho_{0}
	\end{matrix}\right)\stackrel{\mathcal{D}}{\longrightarrow}\mathcal{N}
	\left(
	0, \Sigma_{f} 
	\right),
	\end{equation}\\	
	where $\Sigma_{f}  =\left(\begin{matrix}2(\theta_{0}\sigma_{01}^{2})^{2}&2(\theta_{0}\rho_{0}\sigma_{01}\sigma_{02})^{2}&
	\theta_{0}\rho_{0}\sigma_{01}^{2}(1-\rho_{0}^{2})\\
	2(\theta_{0}\rho_{0}\sigma_{01}\sigma_{02})^{2}&2(\theta_{0}\sigma_{02}^{2})^{2}&\theta_{0}\rho_{0}\sigma_{02}^{2}(1-\rho_{0}^{2})\\
	\theta_{0}\rho_{0}\sigma_{01}^{2}(1-\rho_{0}^{2})&\theta_{0}\rho_{0}\sigma_{02}^{2}(1-\rho_{0}^{2})&(\rho_{0}^{2}-1)^{2}
	\end{matrix}\right).$\\\\
\end{thm}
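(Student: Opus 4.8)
The plan is to prove the three central limit theorems by a unified M-estimation / Taylor-expansion argument, treating the simpler cases as corollaries of the setup built for the last one. First I would write the score equations. For the third (fully unknown) case, reparametrize by $\mu_k = \sigma_k^2 \theta$ for $k=1,2$ and work with $(\mu_1,\mu_2,\rho,\theta)$; using the explicit form of $l_n$ from Lemma~\ref{lem:expression:likelihood}, differentiate in $\theta$, in $\sigma_k^2$ (equivalently $\mu_k$) and in $\rho$. The key quantities appearing are the sums
\[
\sum_{i=2}^n W_{k,i,n}^2, \qquad
\sum_{i=2}^n W_{1,i,n} W_{2,i,n}, \qquad
\sum_{i=2}^n \frac{\partial}{\partial\theta}\Big(\,\cdot\,\Big),
\]
and their $\theta$-derivatives, where $W_{k,i,n}$ is as in the proof of Theorem~\ref{thm:consistency}. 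The central observation is that, thanks to the Markov property, $\{W_{1,i,n}\}_i$ and $\{W_{2,i,n}\}_i$ are i.i.d.\ standard Gaussian sequences with $\mathrm{corr}(W_{1,i,n},W_{2,i,n})=\rho_0$, so the vector of normalized score components is, up to $O_{up}(1)$ remainder terms divided by $\sqrt n$, a sum of i.i.d.\ mean-zero random vectors. Lemma~\ref{lem:cross:CLT} supplies exactly the delicate cross term $G$, identifying its leading stochastic part as $-\sigma_{01}\sigma_{02}\sqrt{1+\rho_0^2}\,\theta_0 \theta^{-2}\sum_i Y_{i,n}$ with the $Y_{i,n}$ i.i.d., which is why that lemma is flagged as essential.

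Next I would apply the classical expansion: write $0 = \nabla l_n(\hat\psi) = \nabla l_n(\psi_0) + \nabla^2 l_n(\bar\psi)(\hat\psi - \psi_0)$ for some $\bar\psi$ on the segment, which by Theorem~\ref{thm:consistency} lies in $\mathcal{S}$ eventually. Then $\sqrt n(\hat\psi-\psi_0) = -\big(\tfrac1n \nabla^2 l_n(\bar\psi)\big)^{-1} \tfrac{1}{\sqrt n}\nabla l_n(\psi_0)$. I would establish two facts: (i) $\tfrac1n \nabla^2 l_n(\bar\psi) \to H$ almost surely for a deterministic invertible matrix $H$, uniformly over $\mathcal{S}$ — this uses the law-of-large-numbers behaviour of $\sum W_{k,i,n}^2 = (n-1)\sigma_{0k}^2\theta_0/(\sigma_k^2\theta) + O(n^{1/2+\alpha})$ type estimates already quoted as (\ref{c}), together with their analogues for $\sum W_{1,i,n}W_{2,i,n}$ and for the $\theta$-derivative sums; and (ii) $\tfrac{1}{\sqrt n}\nabla l_n(\psi_0) \to \mathcal N(0, V)$ by the multivariate Lindeberg CLT applied to the i.i.d.\ summands built from $(W_{1,i,n},W_{2,i,n})$, after checking that all $O_{up}(1)/\sqrt n$ remainders vanish. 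The limiting law is then $\mathcal N(0, H^{-1} V H^{-\top})$, and a (somewhat tedious but routine) computation of the Gaussian moments $\mathbb E[W_1^2 W_2^2]$, $\mathbb E[W_1^3 W_2]$, $\mathbb E[W_1^4]$ etc.\ under correlation $\rho_0$ yields $H^{-1}VH^{-\top} = \Sigma_f$. The first two cases follow by the same mechanism restricted to the relevant sub-block: when $\sigma_k^2,\rho$ are fixed at their true values only the $\theta$-score survives, giving the scalar variance $\theta_0^2$ in (\ref{dotto}); when only $\sigma_k^2$ are fixed one keeps the $2\times2$ block in $(\theta,\rho)$, giving $\Sigma_{\theta\rho}$ in (\ref{res2}). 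One should check these are consistent, i.e.\ that the $(\theta,\rho)$-block of the information in the full model, after accounting for the nuisance $\mu_k$, reduces correctly — or, more simply, just recompute each case from scratch since the scores are short.

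I expect the main obstacle to be twofold. The conceptual/bookkeeping difficulty is controlling all the error terms uniformly: the expansions (\ref{a})--(\ref{c}) and Lemma~\ref{lem:cross:CLT} produce remainders of order $O_{up}(n^{1/2+\alpha_k})$ in the raw scores, and one needs these to be $o_p(\sqrt n)$ after the $1/\sqrt n$ scaling, which forces a careful choice of the $\alpha_k$ (any $\alpha_k < 1/2$ works) and a verification that the Hessian remainders are genuinely lower order — this is where the $O_u(1)$ / $O_{up}(1)$ calculus introduced before the lemma earns its keep. The computational difficulty is the exact evaluation of $H$ and $V$ and the matrix product $H^{-1}VH^{-\top}$: the off-diagonal entries such as $2(\theta_0\rho_0\sigma_{01}\sigma_{02})^2$ and $\theta_0\rho_0\sigma_{0k}^2(1-\rho_0^2)$ come from cross-covariances between the $W_1$-block sums, the $W_2$-block sums and the cross-sum $\sum W_{1,i,n}W_{2,i,n}$, and getting the fourth-moment combinatorics right for bivariate normals with correlation $\rho_0$ is the place where a sign or factor error is most likely. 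I would organize that computation by first writing $V$ in terms of $\mathbb E[(W_1^2-1)^2]=2$, $\mathbb E[(W_1^2-1)(W_2^2-1)]=2\rho_0^2$, $\mathbb E[(W_1^2-1)(W_1W_2-\rho_0)]=2\rho_0$, $\mathbb E[(W_1W_2-\rho_0)^2]=1+\rho_0^2$, then inverting the (block-triangular after reparametrization) matrix $H$ explicitly, and finally reading off $\Sigma_f$, $\Sigma_{\theta\rho}$ and the scalar $\theta_0^2$ as its principal submatrices.
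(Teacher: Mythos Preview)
Your core ingredients --- the score equations, the i.i.d.\ structure of the pairs $(W_{1,i,n},W_{2,i,n})$ with correlation $\rho_0$, Lemma~\ref{lem:cross:CLT} for the cross term, and a multivariate CLT on the centered sums --- are exactly those the paper uses. Your list of fourth moments is correct and reproduces the paper's matrix $\Sigma_\xi$ (the paper packages $W_1W_2-\rho_0$ as $(1+\rho_0^2)^{1/2}\xi_{3,i}$).

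Where you diverge from the paper is the linearization step, and this is where a gap appears. The paper never forms a Hessian and never Taylor-expands around $\psi_0$. Instead it derives, for each score component, an expansion valid at an \emph{arbitrary} $\psi\in\mathcal S$ of the form
\[
(\text{normalizer})\cdot s_x(\psi)=(n-1)\,\varphi_x(\psi,\psi_0)\;-\;\sum_{k=1}^{3} c_{x,k}(\psi)\sum_{i=2}^{n}\xi_{k,i}\;+\;O_{up}(1),
\]
with explicit $\varphi_x$ and smooth coefficients $c_{x,k}$; it then substitutes $\psi=\hat\psi$, uses consistency of the \emph{microergodic} quantities to replace $c_{x,k}(\hat\psi)$ by their limits, and solves the resulting $3\times3$ linear system directly (only $s_\theta,s_\rho,s_{\sigma_1^2}$ are used; the fourth equation is redundant by non-identifiability). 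A delta method converts the answer for $\hat\theta\hat\rho\hat\sigma_1\hat\sigma_2$ into one for $\hat\rho$, and $\rho_0=0$ is handled separately because the manipulation of $s_\rho$ introduces a $1/\hat\rho$.

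Your plan instead writes $0=\nabla l_n(\psi_0)+\nabla^2 l_n(\bar\psi)(\hat\psi-\psi_0)$ in the four-parameter chart $(\mu_1,\mu_2,\rho,\theta)$ and asserts that $n^{-1}\nabla^2 l_n(\bar\psi)\to H$ with $H$ deterministic and \emph{invertible}. This cannot hold as stated. Theorem~\ref{thm:consistency} gives consistency only of $\hat\mu_k$ and $\hat\rho$; the component $\hat\theta$ need not converge to $\theta_0$ (indeed $\theta$ is non-microergodic), so $\bar\psi$ need not approach any fixed point. More fundamentally, because the measures $P_\psi$ are equivalent along the $\theta$-direction at fixed $(\mu_1,\mu_2,\rho)$, the $\theta$-score and all $\partial^2_{\theta,\cdot}l_n$ entries are $O_{up}(1)$, so the limiting $4\times4$ matrix $H$ has a zero $\theta$-row and $\theta$-column and is singular. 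Your argument can be repaired by proving exactly those $O_{up}(1)$ bounds and then reading off the invertible $3\times3$ block in $(\mu_1,\mu_2,\rho)$, but that extra work is precisely what the paper's direct expansions at arbitrary $\psi$ accomplish without ever differentiating twice. For cases one and two your scheme is fine as written, since there $\hat\sigma_k^2=\sigma_{0k}^2$ is fixed and $\hat\theta$ is genuinely consistent.
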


\begin{proof}
	Let  $s_x(\psi)=\frac{\partial{}}{\partial x}l_{n}(\psi)$
	the derivative of the negative log-likelihood with respect to $x=\sigma_1^2,\sigma_2^2,\theta,\rho$.
	From Lemma \ref{lem:cross:CLT} and from Equation (3.11) in \cite{Yin1991} we can write, with $W_{k,i,n}$ as in the proof of Theorem \ref{thm:consistency}, 
	\begin{equation}
	s_{\theta}(\psi)
	=\frac{2n}{\theta}-\frac{1}{1-\rho^{2}}\left( \sum_{k=1}^{2}\sum_{i=2}^{n}\frac{\sigma_{0k}^{2}\theta_{0}}{\sigma_{k}^{2}\theta^{2}}W_{k,i,n}^{2} -
	2\rho (1+\rho^{2}_{0})^{\frac{1}{2}}\frac{\sigma_{01}\sigma_{02}\theta_{0}} {\sigma_{1}\sigma_{2}\theta^{2}}\sum_{i=2}^{n} Y_{i,n}
	\right)+O_{up}(1).\label{deri1}
	\end{equation}
	Then from (\ref{deri1}) we have
	\begin{eqnarray}\label{deri2}
	\theta^{2}(1-\rho^{2})	s_{\theta}(\psi)&=&(n-1) \left[2\theta(1-\rho^{2})-\theta_{0}\left(
	\frac{\sigma^{2}_{01}}{\sigma^{2}_{1}}-2\rho \rho_{0}\frac{\sigma_{01}\sigma_{02}}{\sigma_{1} \sigma_{2}}+\frac{\sigma^{2}_{02}}{\sigma^{2}_{2}} \right) \right]\\&&-	\sum_{k=1}^{2}\sum_{i=2}^{n}\frac{\sigma^{2}_{0k}\theta_{0}}{\sigma^{2}_{k}}\xi_{k,i}
	+2\rho(1+\rho^{2}_{0})^{\frac{1}{2}}\theta_{0}\frac{\sigma_{01}\sigma_{02}}{\sigma_{1}\sigma_{2}}\sum_{i=2}^{n}\xi_{3,i}+O_{up}(1),\nonumber
	\end{eqnarray}
	with $\xi_{k,i}=W^{2}_{k,i,n}-1,\,\,\,k=1,2$ and $\xi_{3,i}=Y_{i,n}-\frac{\rho_{0}}{(1+\rho^{2}_{0})^{\frac{1}{2}}}$.\\\\
	Then $\hat{\psi}$ satisfies $s_{\theta}(\hat{\psi})=0$ and in view of (\ref{deri2}), we get
	\begin{eqnarray}\label{deri3}
	0=\hat{\theta}^{2}(1-\hat{\rho}^{2})s_{\theta}(\hat{\psi})&=&(n-1) \left[2\hat{\theta}(1-\hat{\rho}^{2})-\theta_{0}\left(
	\frac{\sigma^{2}_{01}}{\hat{\sigma}^{2}_{1}}-2\hat{\rho} \rho_{0}\frac{\sigma_{01}\sigma_{02}}{\hat{\sigma}_{1} \hat{\sigma}_{2}}+\frac{\sigma^{2}_{02}}{\hat{\sigma}^{2}_{2}} \right) \right]\\&&-\sum_{k=1}^{2}\sum_{i=2}^{n}\frac{\sigma^{2}_{0k}\theta_{0}}{\hat{\sigma}^{2}_{k}}\xi_{k,i}
	+2\hat{\rho}(1+\rho^{2}_{0})^{\frac{1}{2}}\theta_{0}\frac{\sigma_{01}\sigma_{02}}{\hat{\sigma}_{1}\hat{\sigma}_{2}}\sum_{i=2}^{n}
	\xi_{3,i}+O_{p}(1).\nonumber
	\end{eqnarray}
	If we set $a_{\sigma_{k}}=b_{\sigma_{k}}=\sigma_{0k}^{2}=\hat{\sigma}^{2}_{k}$ for $k=1,2$ and $a_{\rho}=b_{\rho}=\rho_{0}=\hat{\rho}$  in (\ref{deri3}), we get
	\begin{equation}\label{der1}
	0=
	2(n-1)\left[ (\hat{\theta}-\theta_{0})(1-\rho_{0}^{2})\right]
	-\theta_{0}\left[
	\sum_{k=1}^{2}\sum_{i=2}^{n}\xi_{k,i}-
	2\rho_{0}(1+\rho^{2}_{0})^{\frac{1}{2}}\sum_{i=2}^{n}\xi_{3,i}\right]
	+O_{p}(1).
	\end{equation}
	Hence (\ref{der1}) implies
	\begin{equation*}
	\sqrt{n}(\hat{\theta}-\theta_{0})=
	\frac{\theta_{0}n^{-\frac{1}{2}}}{2(1-\rho_{0}^{2})}\left[
	\sum_{k=1}^{2}\sum_{i=2}^{n}\xi_{k,i}-
	2\rho_{0}(1+\rho^{2}_{0})^{\frac{1}{2}}\sum_{i=2}^{n}\xi_{3,i}\right]
	+O_{p}\left(n^{-\frac{1}{2}}\right).
	\end{equation*}\\
	On the other hand, from the multivariate central limit theorem we get
	\begin{equation}\label{lim}
	n^{-\frac{1}{2}}\sum_{i=2}^{n}\left(\begin{matrix}
	\xi_{1,i}\\\xi_{2,i}\\\xi_{3,i}
	\end{matrix}\right)\stackrel{\mathcal{D}}{\longrightarrow}\mathcal{N}\left(0,\Sigma_{\xi}\right),
	\end{equation}
	where
	$\Sigma_{\xi}=\left(\begin{matrix}	
	2 & 2\rho_{0}^{2} & \frac{2\rho_{0}}{(1+\rho_{0}^{2})^{\frac{1}{2}}}\\
	2\rho_{0}^{2}&2&\frac{2\rho_{0}}{(1+\rho_{0}^{2})^{\frac{1}{2}}}\\
	\frac{2\rho_{0}}{(1+\rho_{0}^{2})^{\frac{1}{2}}}&\frac{2\rho_{0}}{(1+\rho_{0}^{2})^{\frac{1}{2}}}&1
	\end{matrix}\right),$ is obtained by calculating $Cov(\xi_{m,i},\xi_{l,i})$ for $m,l=1,2,3$ and $i=2,\ldots,n$.\\
	Hence we have
	\begin{equation*}
	\sqrt{n}(\hat{\theta}-\theta_{0})\stackrel{\mathcal{D}}{\longrightarrow}\mathcal{N}
	\left( 0,\Sigma_{\theta} 
	\right),
	\end{equation*}
	where
	$\Sigma_{\theta} = \frac{\theta_{0}^{2}}{4(1-\rho_{0}^{2})^{2}} \left(\begin{matrix}1&1&-	2\rho_{0}(1+\rho^{2}_{0})^{\frac{1}{2}}\end{matrix}\right) 
	\left(\begin{matrix}	
	2 & 2\rho_{0}^{2} & \frac{2\rho_{0}}{(1+\rho_{0}^{2})^{\frac{1}{2}}}\\
	2\rho_{0}^{2}&2&\frac{2\rho_{0}}{(1+\rho_{0}^{2})^{\frac{1}{2}}}\\
	\frac{2\rho_{0}}{(1+\rho_{0}^{2})^{\frac{1}{2}}}&\frac{2\rho_{0}}{(1+\rho_{0}^{2})^{\frac{1}{2}}}&1
	\end{matrix}\right)\left(\begin{matrix}1\\1\\-2\rho_{0}(1+\rho^{2}_{0})^{\frac{1}{2}}\end{matrix}\right). $\\\\ 
	Then, computing the previous quadratic form, we get 
	\begin{equation*}
	\sqrt{n}(\hat{\theta}-\theta_{0})\stackrel{\mathcal{D}}{\longrightarrow}\mathcal{N}
	\left(
	0, \theta_{0}^{2}
	\right),
	\end{equation*}
	so (\ref{dotto}) is proved. Now, we first prove (\ref{res2}) and (\ref{res3}) for $\rho_{0}\in (-1,1)\diagdown\{0\}$ and discuss the case $\rho_{0}=0$ at the end of the proof.
	To show (\ref{res2}), take differentiation with respec to $\rho$. From the proof of Theorem 2 given in \cite{Yin1991}, and from arguments similar to those of the proof of Lemma \ref{lem:cross:CLT}, we get
	\begin{eqnarray}\nonumber
	s_{\rho}(\psi)&=& \frac{-2 n \rho}{(1-\rho^{2})}+
	\frac{2 \rho}{(1-\rho^{2})^{2}}\left(\sum_{k=1}^{2}\sum_{i=2}^{n}	\frac{\sigma^{2}_{0k}\theta_{0}}{\sigma^{2}_{k}\theta}W^{2}_{k,i,n}
	-\frac{(1+\rho^{2})(1+\rho_{0}^{2})^{\frac{1}{2}}\sigma_{01}\sigma_{02}\theta_{0}}{\rho \sigma_{1}\sigma_{2}\theta}\sum_{i=2}^{n}Y_{i,n}\right)\\&&+O_{up}(1).\label{derr1}
	\end{eqnarray}	
	Then (\ref{derr1}) implies	
	\begin{eqnarray}\label{derr2}
	-\frac{(1-\rho^{2})^{2}\theta}{2\rho}s_{\rho}(\psi)&=&(n-1) \left[\theta(1-\rho^{2})-\theta_{0}\left(\frac{\sigma^{2}_{01}}{\sigma^{2}_{1}}-
	\frac{(1+\rho^{2})\rho_{0}}{\rho}\frac{\sigma_{01}\sigma_{02}}{\sigma_{1}\sigma_{2}}+\frac{\sigma^{2}_{02}}{\sigma^{2}_{2}}  \right)\right]\\\nonumber&&-
	\sum_{k=1}^{2}	\sum_{i=2}^{n}\frac{\sigma^{2}_{0k}\theta_{0}}{\sigma^{2}_{k}}\xi_{k,i}-
	\frac{(1+\rho^{2})(1+\rho^{2}_{0})^{\frac{1}{2}}\theta_{0}}{\rho} \frac{\sigma_{01}\sigma_{02}}{\sigma_{1}\sigma_{2}} \sum_{i=2}^{n}\xi_{3,i}\\&&+O_{up}(1),\nonumber
	\end{eqnarray}
	Then $\hat{\psi}$ satisfies $s_{\rho}(\hat{\psi})=0$ and in view of (\ref{derr2}), we get
	\begin{eqnarray}\nonumber
	0=-\frac{(1-\hat{\rho}^{2})^{2}\hat{\theta}}{2\hat{\rho}}s_{\rho}(\hat{\psi})&=&(n-1) \left[\hat{\theta}(1-\hat{\rho}^{2})-\theta_{0}\left(\frac{\sigma^{2}_{01}}{\hat{\sigma}^{2}_{1}}-
	\frac{(1+\hat{\rho}^{2})\rho_{0}}{\hat{\rho}}\frac{\sigma_{01}\sigma_{02}}{\hat{\sigma}_{1}\hat{\sigma}_{2}}+\frac{\sigma^{2}_{02}}{\hat{\sigma}^{2}_{2}}  \right)\right]\\\nonumber&&-
	\sum_{k=1}^{2}\sum_{i=2}^{n}\frac{\sigma^{2}_{0k}\theta_{0}}{\hat{\sigma}^{2}_{k}}\xi_{k,i}-
	\frac{(1+\hat{\rho}^{2})(1+\rho^{2}_{0})^{\frac{1}{2}}\theta_{0}}{\hat{\rho}} \frac{\sigma_{01}\sigma_{02}}{\hat{\sigma}_{1}\hat{\sigma}_{2}} \sum_{i=2}^{n}\xi_{3,i}\\&&+O_{p}(1).\label{deri0}
	\end{eqnarray}
	Then we can write, from (\ref{deri3}) and (\ref{deri0})	
	\begin{eqnarray}\nonumber
	\left(\begin{matrix}
	0\\
	0
	\end{matrix}\right)&=&(n-1)\left(\begin{matrix} 2\hat{\theta}(1-\hat{\rho}^{2})-\theta_{0}\left(
	\frac{\sigma^{2}_{01}}{\hat{\sigma}^{2}_{1}}-2\hat{\rho}\rho_{0}\frac{\sigma_{01}\sigma_{02}}{\hat{\sigma}_{1}\hat{\sigma}_{2}}+\frac{\sigma^{2}_{02}}{\hat{\sigma}^{2}_{2}} \right) \\ 
	\hat{\theta}(1-\hat{\rho}^{2})-\theta_{0}\left(\frac{\sigma^{2}_{01}}{\hat{\sigma}^{2}_{1}}-
	\frac{(1+\hat{\rho}^{2})\rho_{0}}{\hat{\rho}}\frac{\sigma_{01}\sigma_{02}}{\hat{\sigma}_{1}\hat{\sigma}_{2}}+\frac{\sigma^{2}_{02}}{\hat{\sigma}^{2}_{2}}  \right)
	\end{matrix}\right)\\&&-
	\left(\begin{matrix}	
	\frac{\sigma^{2}_{01}\theta_{0}}{\hat{\sigma}^{2}_{1}}&
	\frac{\sigma^{2}_{02}\theta_{0}}{\hat{\sigma}^{2}_{2}}&
	-2\hat{\rho}(1+\rho^{2}_{0})^{\frac{1}{2}}\theta_{0}\frac{\sigma_{01}\sigma_{02}}{\hat{\sigma}_{1}\hat{\sigma}_{2}}
	\\
	\frac{\sigma^{2}_{01}\theta_{0}}{\hat{\sigma}^{2}_{1}}&
	\frac{\sigma^{2}_{02}\theta_{0}}{\hat{\sigma}^{2}_{2}}&
	-\frac{(1+\hat{\rho}^{2})(1+\rho^{2}_{0})^{\frac{1}{2}}\theta_{0}}{\hat{\rho}} \frac{\sigma_{01}\sigma_{02}}{\hat{\sigma}_{1}\hat{\sigma}_{2}}\\
	\end{matrix}\right)	\left(\begin{matrix}
	\sum_{i=2}^{n}\xi_{1,i}\\\sum_{i=2}^{n}\xi_{2,i}\\\sum_{i=2}^{n}\xi_{3,i}
	\end{matrix}\right)+O_{p}(1).\label{deriv1} 
	\end{eqnarray}
	If we set $a_{\sigma_{k}}=b_{\sigma_{k}}=\sigma_{0k}^{2}=\hat{\sigma}^{2}_{k}$ for $k=1,2$ in (\ref{deriv1}), we get
	\begin{eqnarray}\nonumber
	\left(\begin{matrix}
	0\\0\end{matrix}\right)&=&(n-1)\left(\begin{matrix}2\hat{\theta}(1-\hat{\rho}^{2})-2\theta_{0}\left(
	1-\hat{\rho}\rho_{0}\right)    \\ 
	\hat{\theta}(1-\hat{\rho}^{2})-\theta_{0}\left(2-
	\frac{(1+\hat{\rho}^{2})\rho_{0}}{\hat{\rho}}  \right) \end{matrix}\right)\\&&	\nonumber
	-\theta_{0}\left(\begin{matrix}	
	1&
	1&
	-2\hat{\rho}(1+\rho^{2}_{0})^{\frac{1}{2}}
	\\
	1&
	1&
	-\frac{(1+\hat{\rho}^{2})(1+\rho^{2}_{0})^{\frac{1}{2}}}{\hat{\rho}} 
	\end{matrix}\right)	\left(\begin{matrix}
	\sum_{i=2}^{n}\xi_{1,i}\\\sum_{i=2}^{n}\xi_{2,i}\\\sum_{i=2}^{n}\xi_{3,i}
	\end{matrix}\right)+O_{p}(1)\\\nonumber
	&=&(n-1)\left(\begin{matrix}2(\hat{\theta}-\theta_{0})-2\hat{\rho}\left(\hat{\theta}\hat{\rho}-\theta_{0}\rho_{0}\right) \\
	(\hat{\theta}-\theta_{0})-\hat{\rho}\left(\hat{\theta}\hat{\rho}-\theta_{0}\rho_{0}\right)-\frac{\theta_{0}}{\hat{\rho}}(\hat{\rho}-\rho_{0})
	\end{matrix}\right)\\&&\label{dis1}
	-\theta_{0}\left[ \left(\begin{matrix}	
	1&
	1&
	-2\rho_{0}(1+\rho^{2}_{0})^{\frac{1}{2}}
	\\
	1&
	1&
	-\frac{(1+\rho^{2}_{0})^{\frac{3}{2}}}{\rho_{0}} 
	\end{matrix}\right)+o_{p}(1) \right]\left(\begin{matrix}
	\sum_{i=2}^{n}\xi_{1,i}\\\sum_{i=2}^{n}\xi_{2,i}\\\sum_{i=2}^{n}\xi_{3,i}
	\end{matrix}\right)+O_{p}(1).
	\end{eqnarray}
	Furthermore,
	\begin{footnotesize}
		\begin{eqnarray}
		\left(\begin{matrix}2(\hat{\theta}-\theta_{0})-2\hat{\rho}\left(\hat{\theta}\hat{\rho}-\theta_{0}\rho_{0}\right) \\
		(\hat{\theta}-\theta_{0})-\hat{\rho}\left(\hat{\theta}\hat{\rho}-\theta_{0}\rho_{0}\right)-\frac{\theta_{0}}{\hat{\rho}}(\hat{\rho}-\rho_{0})
		\end{matrix}\right)
		&=&\left(\begin{matrix}2&-2\hat{\rho}&0\\
		1&-\hat{\rho}&-\frac{\theta_{0}}{\hat{\rho}} \end{matrix}\right)\left(\begin{matrix}\hat{\theta}-\theta_{0}\\
		\hat{\theta}\hat{\rho}-\theta_{0}\rho_{0}\\
		\hat{\rho}-\rho_{0}
		\end{matrix}\right) \nonumber \\
		&=&\left[\left(\begin{matrix}2&-2\rho_{0}&0\\
		1&-\rho_{0}&-\frac{\theta_{0}}{\rho_{0}} \end{matrix}\right)+o_{p}(1)\right]\left(\begin{matrix}\hat{\theta}-\theta_{0}\\
		\rho_{0}(\hat{\theta}-\theta_{0})+\hat{\theta}(\hat{\rho}-\rho_{0})	\\
		\hat{\rho}-\rho_{0}
		\end{matrix}\right) \nonumber \\
		&=&\left[\left(\begin{matrix}2&-2\rho_{0}&0\\
		1&-\rho_{0}&-\frac{\theta_{0}}{\rho_{0}} \end{matrix}\right)+o_{p}(1)\right]
		\left[\left(\begin{matrix} 1&0\\
		\rho_{0}&\hat{\theta}\\
		0&1
		\end{matrix}\right)
		\left(\begin{matrix}\hat{\theta}-\theta_{0}\\
		\hat{\rho}-\rho_{0}
		\end{matrix}\right)\right] \nonumber \\
		&=&
		\left[\left(\begin{matrix}2&-2\rho_{0}&0\\
		1&-\rho_{0}&-\frac{\theta_{0}}{\rho_{0}} \end{matrix}\right)
		\left(\begin{matrix} 1&0\\
		\rho_{0}&\theta_{0}\\
		0&1
		\end{matrix}\right)+o_{p}(1)\right]
		\left(\begin{matrix}\hat{\theta}-\theta_{0}\\
		\hat{\rho}-\rho_{0}
		\end{matrix}\right) \nonumber \\
		&=&
		\left[\left(\begin{matrix}2-2\rho_{0}^{2}&-2\rho_{0}\theta_{0}\\
		1-\rho_{0}^{2}&-\theta_{0}\rho_{0}-\frac{\theta_{0}}{\rho_{0}} \end{matrix}\right)+o_{p}(1)\right]
		\left(\begin{matrix}\hat{\theta}-\theta_{0}\\
		\hat{\rho}-\rho_{0}
		\end{matrix}\right).\nonumber \\ \label{dis2}
		\end{eqnarray}
	\end{footnotesize}
	By taking the inverse of the $2 \times 2$ matrix in (\ref{dis2}), we get from (\ref{dis1}): 
	\begin{equation*}
	\sqrt{n}
	\left(\begin{matrix}\hat{\theta}-\theta_{0}\\
	\hat{\rho}-\rho_{0}
	\end{matrix}\right) =	
	\theta_{0}n^{-\frac{1}{2}}\left(\begin{matrix}\frac{1}{2}&\frac{1}{2}&0\\
	-\frac{\rho_{0}}{2\theta_{0}}&-\frac{\rho_{0}}{2\theta_{0}}&\frac{\sqrt{1+\rho_{0}^{2}}}{\theta_{0}} \end{matrix}\right)\left(\begin{matrix}
	\sum_{i=2}^{n}\xi_{1,i}\\\sum_{i=2}^{n}\xi_{2,i}\\\sum_{i=2}^{n}\xi_{3,i}
	\end{matrix}\right)+o_{p}(1).
	\end{equation*}\\
	From (\ref{lim}) we can get
	\begin{equation*}
	\sqrt{n}\left(\begin{matrix}\hat{\theta}-\theta_{0}\\
	\hat{\rho}-\rho_{0}
	\end{matrix}\right)\stackrel{\mathcal{D}}{\longrightarrow}\mathcal{N}
	\left(
	0,\Sigma_{\theta\rho} 
	\right),
	\end{equation*}
	where
	$$ \Sigma_{\theta\rho}=\theta_{0}^{2} \left(\begin{matrix}\frac{1}{2}&\frac{1}{2}&0\\
	-\frac{\rho_{0}}{2\theta_{0}}&-\frac{\rho_{0}}{2\theta_{0}}&\frac{\sqrt{1+\rho_{0}^{2}}}{\theta_{0}} \end{matrix}\right)		\left(\begin{matrix}	
	2 & 2\rho_{0}^{2} & \frac{2\rho_{0}}{(1+\rho_{0}^{2})^{\frac{1}{2}}}\\
	2\rho_{0}^{2}&2&\frac{2\rho_{0}}{(1+\rho_{0}^{2})^{\frac{1}{2}}}\\
	\frac{2\rho_{0}}{(1+\rho_{0}^{2})^{\frac{1}{2}}}&\frac{2\rho_{0}}{(1+\rho_{0}^{2})^{\frac{1}{2}}}&1
	\end{matrix}\right)  \left(\begin{matrix}\frac{1}{2}&\frac{1}{2}&0\\
	-\frac{\rho_{0}}{2\theta_{0}}&-\frac{\rho_{0}}{2\theta_{0}}&\frac{\sqrt{1+\rho_{0}^{2}}}{\theta_{0}} \end{matrix}\right)^{\top}.
	$$ 
	Then, we get that\\
	\begin{equation}\label{case2}
	\sqrt{n}\left(\begin{matrix}\hat{\theta}-\theta_{0}\\
	\hat{\rho}-\rho_{0}
	\end{matrix}\right)\xrightarrow[\mathcal{D}]{}\mathcal{N}
	\left(
	0, \left(\begin{matrix}\theta_{0}^{2}(1+\rho_{0}^{2})&\theta_{0}\rho_{0}(1-\rho_{0}^{2})\\
	\theta_{0}\rho_{0}(1-\rho_{0}^{2})& (-1+\rho_{0}^{2})^{2} \end{matrix}\right)
	\right).
	\end{equation}\\\\
	Let us now show \eqref{res3}. Similarly as for \eqref{derr1}, we can show 
	\begin{equation}\label{ders1}
	s_{\sigma_{1}^{2}}(\psi)=\frac{n}{\sigma_{1}^{2}}-
	\frac{\sigma^{2}_{01}\theta_{0}}{(1-\rho^2){\sigma}^{4}_{1}\theta}\sum_{i=2}^{n}W^{2}_{1,i,n}+
	\rho (1+{\rho}_{0}^{2})^{\frac{1}{2}}   \frac{\sigma_{01}\sigma_{02}\theta_{0}}{(1-\rho^2){\sigma}_{1}^{3}{\sigma}_{2}\theta}\sum_{i=2}^{n}Y_{i,n}+O_{up}(1).
	\end{equation}	
	Then (\ref{ders1}) implies
	\begin{eqnarray}\label{ders2}
	\sigma_{1}^{2}(1-\rho^{2})\theta	s_{\sigma_{1}^{2}}(\psi)&=& (n-1)\left[{\theta}(1-{\rho}^{2})-\theta_{0}\left(\frac{\sigma^{2}_{01}}{{\sigma}^{2}_{1}}-{\rho}\rho_{0}\frac{\sigma_{01}\sigma_{02}}{{\sigma}_{1}{\sigma}_{2}}\right)\right]\\&&-
	\frac{\sigma^{2}_{01}\theta_{0}}{{\sigma}^{2}_{1}}\sum_{i=2}^{n}\xi_{1,i}+
	{\rho}(1+{\rho}_{0}^{2})^{\frac{1}{2}}\theta_{0}\frac{\sigma_{01}\sigma_{02}}{{\sigma}_{1}{\sigma}_{2}}\sum_{i=2}^{n}
	\xi_{3,i}+O_{up}(1).\nonumber
	\end{eqnarray}
	Then $\hat{\psi}$ satisfies $s_{\sigma_{1}^{2}}(\hat{\psi})=0$ and in view of (\ref{ders2}), we get
	\begin{eqnarray}\nonumber
	0=\hat{\sigma}_{1}^{2}(1-\hat{\rho}^{2})\hat{\theta}s_{\sigma_{1}^{2}}(\hat{\psi})	&=& (n-1)\left[\hat{\theta}(1-\hat{\rho}^{2})-\theta_{0}\left(\frac{\sigma^{2}_{01}}{\hat{\sigma}^{2}_{1}}-\hat{\rho}\rho_{0}\frac{\sigma_{01}\sigma_{02}}{\hat{\sigma}_{1}\hat{\sigma}_{2}}\right)\right]\\&&-
	\frac{\sigma^{2}_{01}\theta_{0}}{\hat{\sigma}^{2}_{1}}\sum_{i=2}^{n}\xi_{1,i}+
	\hat{\rho}(1+\rho_0^{2})^{\frac{1}{2}}\theta_{0}\frac{\sigma_{01}\sigma_{02}}{\hat{\sigma}_{1}\hat{\sigma}_{2}}\sum_{i=2}^{n}
	\xi_{3,i}+O_{p}(1).\label{deriv10}
	\end{eqnarray}
	Then we can write, from (\ref{deri3}), (\ref{deri0})	and (\ref{deriv10})
	\begin{eqnarray}\nonumber
	\left(\begin{matrix}
	0\\
	0\\
	0	  
	\end{matrix}\right)&=&(n-1)\left(\begin{matrix} 2\hat{\theta}(1-\hat{\rho}^{2})-\theta_{0}\left(
	\frac{\sigma^{2}_{01}}{\hat{\sigma}^{2}_{1}}-2\hat{\rho}\rho_{0}\frac{\sigma_{01}\sigma_{02}}{\hat{\sigma}_{1}\hat{\sigma}_{2}}+\frac{\sigma^{2}_{02}}{\hat{\sigma}^{2}_{2}} \right) \\ 
	\hat{\theta}(1-\hat{\rho}^{2})-\theta_{0}\left(\frac{\sigma^{2}_{01}}{\hat{\sigma}^{2}_{1}}-
	\frac{(1+\hat{\rho}^{2})\rho_{0}}{\hat{\rho}}\frac{\sigma_{01}\sigma_{02}}{\hat{\sigma}_{1}\hat{\sigma}_{2}}+\frac{\sigma^{2}_{02}}{\hat{\sigma}^{2}_{2}}  \right)\\ \hat{\theta}(1-\hat{\rho}^{2})-\theta_{0}\left(\frac{\sigma^{2}_{01}}{\hat{\sigma}^{2}_{1}}-\hat{\rho}\rho_{0}\frac{\sigma_{01}\sigma_{02}}{\hat{\sigma}_{1}\hat{\sigma}_{2}}\right) \end{matrix}\right)\\&&-
	\left(\begin{matrix}	
	\frac{\sigma^{2}_{01}\theta_{0}}{\hat{\sigma}^{2}_{1}}&
	\frac{\sigma^{2}_{02}\theta_{0}}{\hat{\sigma}^{2}_{2}}&
	-2\hat{\rho}(1+\rho^{2}_{0})^{\frac{1}{2}}\theta_{0}\frac{\sigma_{01}\sigma_{02}}{\hat{\sigma}_{1}\hat{\sigma}_{2}}
	\\
	\frac{\sigma^{2}_{01}\theta_{0}}{\hat{\sigma}^{2}_{1}}&
	\frac{\sigma^{2}_{02}\theta_{0}}{\hat{\sigma}^{2}_{2}}&
	-\frac{(1+\hat{\rho}^{2})(1+\rho^{2}_{0})^{\frac{1}{2}}\theta_{0}}{\hat{\rho}} \frac{\sigma_{01}\sigma_{02}}{\hat{\sigma}_{1}\hat{\sigma}_{2}}\\
	\frac{\sigma^{2}_{01}\theta_{0}}{\hat{\sigma}^{2}_{1}}&
	0&
	-\hat{\rho}(1+\rho_{0}^{2})^{\frac{1}{2}}\theta_{0}\frac{\sigma_{01}\sigma_{02}}{\hat{\sigma}_{1}\hat{\sigma}_{2}}
	\end{matrix}\right)	\left(\begin{matrix}
	\sum_{i=2}^{n}\xi_{1,i}\\\sum_{i=2}^{n}\xi_{2,i}\\\sum_{i=2}^{n}\xi_{3,i}
	\end{matrix}\right)+O_{p}(1). \label{case3}
	\end{eqnarray}	\\
	If all parameters are uknown, we get after some  tedious algebra:
	\begin{eqnarray*}
		\left(\begin{matrix} 0\\0\\0\end{matrix}\right)&=&(n-1)\left(\begin{matrix} 
			\frac{1}{\hat{\sigma}_{1}^{2}}&\frac{1}{\hat{\sigma}_{2}^{2}}&
			-\frac{2\hat{\rho}}{\hat{\sigma}_{1}\hat{\sigma_{2}}}\\
			\frac{1}{\hat{\sigma}_{1}^{2}}&\frac{1}{\hat{\sigma}_{2}^{2}}&-\frac{(\hat{\rho}^{2}+1)}{\hat{\rho}\hat{\sigma}_{1}\hat{\sigma_{2}}}\\
			\frac{1}{\hat{\sigma}_{1}^{2}}&0&-\frac{\hat{\rho}}{\hat{\sigma}_{1}\hat{\sigma_{2}}}	
		\end{matrix}\right)\left(\begin{matrix} \hat{\theta}\hat{\sigma}_{1}^{2}-\theta_{0}\sigma_{01}^{2}\\
		\hat{\theta}\hat{\sigma}_{2}^{2}-\theta_{0}\sigma_{02}^{2}\\
		\hat{\theta}\hat{\rho}\hat{\sigma}_{1}\hat{\sigma}_{2}-\theta_{0}\rho_{0}\sigma_{01}\sigma_{02}
	\end{matrix}\right)
	\\&&-
	\left(\begin{matrix}	
		\frac{\sigma^{2}_{01}\theta_{0}}{\hat{\sigma}^{2}_{1}}&
		\frac{\sigma^{2}_{02}\theta_{0}}{\hat{\sigma}^{2}_{2}}&
		-2\hat{\rho}(1+\rho^{2}_{0})^{\frac{1}{2}}\theta_{0}\frac{\sigma_{01}\sigma_{02}}{\hat{\sigma}_{1}\hat{\sigma}_{2}}
		\\
		\frac{\sigma^{2}_{01}\theta_{0}}{\hat{\sigma}^{2}_{1}}&
		\frac{\sigma^{2}_{02}\theta_{0}}{\hat{\sigma}^{2}_{2}}&
		-\frac{(1+\hat{\rho}^{2})(1+\rho^{2}_{0})^{\frac{1}{2}}\theta_{0}}{\hat{\rho}} \frac{\sigma_{01}\sigma_{02}}{\hat{\sigma}_{1}\hat{\sigma}_{2}}\\
		\frac{\sigma^{2}_{01}\theta_{0}}{\hat{\sigma}^{2}_{1}}&
		0&
		-\hat{\rho}(1+\rho_{0}^{2})^{\frac{1}{2}}\theta_{0}\frac{\sigma_{01}\sigma_{02}}{\hat{\sigma}_{1}\hat{\sigma}_{2}}
	\end{matrix}\right)	\left(\begin{matrix}
	\sum_{i=2}^{n}\xi_{1,i}\\\sum_{i=2}^{n}\xi_{2,i}\\\sum_{i=2}^{n}\xi_{3,i}
\end{matrix}\right)+O_{p}(1). 
\end{eqnarray*}	
Applying LU matrix factorization we get
\begin{eqnarray*}
	\left(\begin{matrix} 0\\0\\0\end{matrix}\right)&=&(n-1)
	\left(\begin{matrix}	
		\frac{1}{\hat{\sigma}_{1}\hat{\sigma}_{2}}& 0 & 0\\
		\frac{1}{\hat{\sigma}_{1}\hat{\sigma}_{2}}& \frac{1}{\hat{\sigma}_{1}\hat{\sigma}_{2}}& 0\\
		\frac{1}{\hat{\sigma}_{1}\hat{\sigma}_{2}} & 0 &\frac{1}{\hat{\sigma}_{1}\hat{\sigma}_{2}}	
	\end{matrix}\right)	
	\left(\begin{matrix} \frac{\hat{\sigma}_{2}}{\hat{\sigma}_{1}}& \frac{\hat{\sigma}_{1}}{\hat{\sigma}_{2}}&-2\hat{\rho}\\
		0 & 0& \frac{\hat{\rho}^{2}-1}{\hat{\rho}}\\
		0 & -\frac{\hat{\sigma}_{1}}{\hat{\sigma}_{2}}&\hat{\rho}
	\end{matrix}\right)
	\left(\begin{matrix} \hat{\theta}\hat{\sigma}_{1}^{2}-\theta_{0}\sigma_{01}^{2}\\
		\hat{\theta}\hat{\sigma}_{2}^{2}-\theta_{0}\sigma_{02}^{2}\\
		\hat{\theta}\hat{\rho}\hat{\sigma}_{1}\hat{\sigma}_{2}-\theta_{0}\rho_{0}\sigma_{01}\sigma_{02}
	\end{matrix}\right)
	\\&&-\theta_{0}\left(\begin{matrix}	
		\frac{1}{\hat{\sigma}_{1}\hat{\sigma}_{2}}& 0 & 0\\
		\frac{1}{\hat{\sigma}_{1}\hat{\sigma}_{2}}& \frac{1}{\hat{\sigma}_{1}\hat{\sigma}_{2}}& 0\\
		\frac{1}{\hat{\sigma}_{1}\hat{\sigma}_{2}} & 0 &\frac{1}{\hat{\sigma}_{1}\hat{\sigma}_{2}}	
	\end{matrix}\right)	
	\left(\begin{matrix}
		\sigma_{01}^{2}\frac{\hat{\sigma}_{2}}{\hat{\sigma}_{1}}&\sigma_{02}^{2}\frac{\hat{\sigma}_{1}}{\hat{\sigma}_{2}}&-2(1+\rho_{0}^{2})^{\frac{1}{2}}\sigma_{01}\sigma_{02}\hat{\rho}\\
		0 & 0 & (1+\rho_{0}^{2})^{\frac{1}{2}}\sigma_{01}\sigma_{02} \frac{\hat{\rho}^{2}-1}{\hat{\rho}}\\
		0 & -\sigma_{02}^{2}\frac{\hat{\sigma}_{1}}{\hat{\sigma}_{2}}& (1+\rho_{0}^{2})^{\frac{1}{2}}\sigma_{01}\sigma_{02}\hat{\rho}
	\end{matrix}\right)
	\left(\begin{matrix}
		\sum_{i=2}^{n}\xi_{1,i}\\\sum_{i=2}^{n}\xi_{2,i}\\\sum_{i=2}^{n}\xi_{3,i}
	\end{matrix}\right)\\&&+O_{p}(1). 
\end{eqnarray*}	
Hence we get\\
\begin{eqnarray}\nonumber
\left(\begin{matrix} 0\\0\\0\end{matrix}\right)&=&(n-1)	
\left(
\left(\begin{matrix} \frac{\sigma_{02}}{\sigma_{01}}& \frac{\sigma_{01}}{\sigma_{02}}&-2\rho_{0}\\
0 & 0& \frac{\rho_{0}^{2}-1}{\rho_{0}}\\
0 & -\frac{\sigma_{01}}{\sigma_{02}}&\rho_{0}
\end{matrix}\right)
+o_p(1)
\right)
\left(\begin{matrix} \hat{\theta}\hat{\sigma}_{1}^{2}-\theta_{0}\sigma_{01}^{2}\\
\hat{\theta}\hat{\sigma}_{2}^{2}-\theta_{0}\sigma_{02}^{2}\\
\hat{\theta}\hat{\rho}\hat{\sigma}_{1}\hat{\sigma}_{2}-\theta_{0}\rho_{0}\sigma_{01}\sigma_{02}
\end{matrix}\right)
\\ \nonumber&&-\theta_{0}
\left(
\left(\begin{matrix}
\sigma_{01}\sigma_{02}&\sigma_{01}\sigma_{02}&
-2\rho_{0}(1+\rho_{0}^{2})^{\frac{1}{2}}\sigma_{01}\sigma_{02}\\
0 & 0 & \frac{(\rho_{0}^{2}-1)(1+\rho_{0}^{2})^{\frac{1}{2}}}{\rho_{0}}\sigma_{01}\sigma_{02} \\
0 & -\sigma_{01}\sigma_{02}  & \rho_{0}(1+\rho_{0}^{2})^{\frac{1}{2}}\sigma_{01}\sigma_{02}
\end{matrix}\right)
+o_p(1)
\right)
\left(\begin{matrix}
\sum_{i=2}^{n}\xi_{1,i}\\\sum_{i=2}^{n}\xi_{2,i}\\\sum_{i=2}^{n}\xi_{3,i}
\end{matrix}\right)\\&&+O_{p}(1). \label{eq13}
\end{eqnarray}	
Furthermore, we have
\begin{equation}\label{inv}
\left(\begin{matrix} \frac{\sigma_{02}}{\sigma_{01}}& \frac{\sigma_{01}}{\sigma_{02}}&-2\rho_{0}\\
0 & 0& \frac{\rho_{0}^{2}-1}{\rho_{0}}\\
0 & -\frac{\sigma_{01}}{\sigma_{02}}&\rho_{0}
\end{matrix}\right)^{-1}\left(\begin{matrix}
\sigma_{01}\sigma_{02}&\sigma_{01}\sigma_{02}&
-2\rho_{0}(1+\rho_{0}^{2})^{\frac{1}{2}}\sigma_{01}\sigma_{02}\\
0 & 0 & \frac{(\rho_{0}^{2}-1)(1+\rho_{0}^{2})^{\frac{1}{2}}}{\rho_{0}}\sigma_{01}\sigma_{02} \\
0 & -\sigma_{01}\sigma_{02}  & \rho_{0}(1+\rho_{0}^{2})^{\frac{1}{2}}\sigma_{01}\sigma_{02}
\end{matrix}\right)=\left(\begin{matrix} \sigma_{01}^{2}& 0 & 0\\
0 & \sigma_{02}^{2}& 0\\
0 & 0& \sqrt{\rho_{0}^{2}+1} \sigma_{01} \sigma_{02}
\end{matrix}\right).
\end{equation}
Hence, from (\ref{inv}) and (\ref{eq13}), we obtain \\
\begin{equation*}
\sqrt{n} \left(\begin{matrix} \hat{\theta}\hat{\sigma}_{1}^{2}-\theta_{0}\sigma_{01}^{2}\\
\hat{\theta}\hat{\sigma}_{2}^{2}-\theta_{0}\sigma_{02}^{2}\\
\hat{\theta}\hat{\rho}\hat{\sigma}_{1}\hat{\sigma}_{2}-\theta_{0}\rho_{0}\sigma_{01}\sigma_{02}
\end{matrix}\right) =  \theta_{0}n^{-\frac{1}{2}} \left(\begin{matrix} \sigma_{01}^{2}& 0 & 0\\
0 & \sigma_{02}^{2}& 0\\
0 & 0& \sqrt{\rho_{0}^{2}+1} \sigma_{01} \sigma_{02}
\end{matrix}\right)
\left(\begin{matrix}
\sum_{i=2}^{n}\xi_{1,i}\\\sum_{i=2}^{n}\xi_{2,i}\\\sum_{i=2}^{n}\xi_{3,i}
\end{matrix}\right)+o_{p}(1).
\end{equation*}\\\\
Hence from (\ref{lim}) we can get
\begin{equation*}
\sqrt{n}\left(\begin{matrix} \hat{\theta}\hat{\sigma}_{1}^{2}-\theta_{0}\sigma_{01}^{2}\\
\hat{\theta}\hat{\sigma}_{2}^{2}-\theta_{0}\sigma_{02}^{2}\\
\hat{\theta}\hat{\rho}\hat{\sigma}_{1}\hat{\sigma}_{2}-\theta_{0}\rho_{0}\sigma_{01}\sigma_{02}
\end{matrix}\right)\stackrel{\mathcal{D}}{\longrightarrow}\mathcal{N}
\left(
0,\Sigma_{\theta\rho\sigma_{1}\sigma_{2}} 
\right),
\end{equation*}
where\\
$\Sigma_{\theta\rho\sigma_{1}\sigma_{2}}= \theta_{0}^{2}\left(\begin{matrix} \sigma_{01}^{2}& 0 & 0\\
0 & \sigma_{02}^{2}& 0\\
0 & 0& \sqrt{\rho_{0}^{2}+1} \sigma_{01} \sigma_{02}
\end{matrix}\right)		\left(\begin{matrix}	
2 & 2\rho_{0}^{2} & \frac{2\rho_{0}}{(1+\rho_{0}^{2})^{\frac{1}{2}}}\\
2\rho_{0}^{2}&2&\frac{2\rho_{0}}{(1+\rho_{0}^{2})^{\frac{1}{2}}}\\
\frac{2\rho_{0}}{(1+\rho_{0}^{2})^{\frac{1}{2}}}&\frac{2\rho_{0}}{(1+\rho_{0}^{2})^{\frac{1}{2}}}&1
\end{matrix}\right)  \left(\begin{matrix} \sigma_{01}^{2}& 0 & 0\\
0 & \sigma_{02}^{2}& 0\\
0 & 0& \sqrt{\rho_{0}^{2}+1} \sigma_{01} \sigma_{02}
\end{matrix}\right)^{\top}.$ \\\\
Then, we get:
$$\Sigma_{\theta\rho\sigma_{1}\sigma_{2}}=\left(\begin{matrix}2(\theta_{0}\sigma_{01}^{2})^{2} & 2(\theta_{0}\rho_{0}\sigma_{01}\sigma_{02})^{2}&
2\theta_{0}^{2}\rho_{0}\sigma_{01}^{3}\sigma_{02}\\
2(\theta_{0}\rho_{0}\sigma_{01}\sigma_{02})^{2}&2(\theta_{0}\sigma_{02}^{2})^{2}&
2\theta_{0}^{2}\rho_{0}\sigma_{02}^{3}\sigma_{01}\\
2\theta_{0}^{2}\rho_{0}\sigma_{01}^{3}\sigma_{02}&
2\theta_{0}^{2}\rho_{0}\sigma_{02}^{3}\sigma_{01}&\theta_{0}^{2}(\rho_{0}^{2}+1)^{2}\sigma_{01}^{2}\sigma_{02}^{2}
\end{matrix}\right).$$\\
Let 
$\,\,f \left( \begin{matrix} \hat{\theta}\hat{\sigma}_{1}^{2}\\
\hat{\theta}\hat{\sigma}_{2}^{2}\\
\hat{\theta}\hat{\rho}\hat{\sigma}_{1}\hat{\sigma}_{2}\end{matrix} \right)= \left( \begin{matrix} \hat{\theta}\hat{\sigma}_{1}^{2}\\
\hat{\theta}\hat{\sigma}_{2}^{2}\\
\frac{\hat{\theta}\hat{\rho}\hat{\sigma}_{1}\hat{\sigma}_{2}}{\sqrt{\hat{\theta}\hat{\sigma}_{1}^{2}}\sqrt{\hat{\theta}\hat{\sigma}_{2}^{2}}}  \end{matrix} \right).$\\\\\\
Then, using the multivariate Delta Method we get
\begin{equation*}
\sqrt{n}
\left(\begin{matrix} \hat{\theta}\hat{\sigma}_{1}^{2}-\theta_{0}\sigma_{01}^{2}\\
\hat{\theta}\hat{\sigma}_{2}^{2}-\theta_{0}\sigma_{02}^{2}\\
\hat{\rho}-\rho_{0}
\end{matrix}\right)
\stackrel{\mathcal{D}}{\longrightarrow}\mathcal{N}
\left(
0, \Sigma_{f}
\right),
\end{equation*}\\
where 
$ \Sigma_{f}=H_{f} \Sigma_{\theta\rho\sigma_{1}\sigma_{2}}H_{f}^{\top}$ and $H_{f}=\left(\begin{matrix}
1 & 0 & 0\\
0 & 1 & 0\\ 
-\frac{\rho_{0}}{2\sigma_{01}^{2}\theta_{0}}& -\frac{\rho_{0}}{2\sigma_{02}^{2}\theta_{0}} &
\frac{1}{\sigma_{01}\sigma_{02}\theta_{0}}
\end{matrix}\right).$\\
Finally, we get
\begin{equation}\label{case3f}
\sqrt{n}\left(\begin{matrix}\hat{\sigma}_{1}^{2}\hat{\theta}-\sigma_{01}^{2}\theta_{0}\\
\hat{\sigma}_{2}^{2}\hat{\theta}-\sigma_{02}^{2}\theta_{0}\\
\hat{\rho}-\rho_{0}
\end{matrix}\right)\stackrel{\mathcal{D}}{\longrightarrow}\mathcal{N}
\left(
0, \left(\begin{matrix}2(\theta_{0}\sigma_{01}^{2})^{2}&2(\theta_{0}\rho_{0}\sigma_{01}\sigma_{02})^{2}&
\theta_{0}\rho_{0}\sigma_{01}^{2}(1-\rho_{0}^{2})\\
2(\theta_{0}\rho_{0}\sigma_{01}\sigma_{02})^{2}&2(\theta_{0}\sigma_{02}^{2})^{2}&\theta_{0}\rho_{0}\sigma_{02}^{2}(1-\rho_{0}^{2})\\
\theta_{0}\rho_{0}\sigma_{01}^{2}(1-\rho_{0}^{2})&\theta_{0}\rho_{0}\sigma_{02}^{2}(1-\rho_{0}^{2})&(\rho_{0}^{2}-1)^{2}
\end{matrix}\right)
\right).
\end{equation}\\
In the case $\rho_{0}=0$, we can show that (\ref{case2}) and (\ref{case3f}) are still true with the same proof. The only difference is that we multiply the second line of (\ref{deriv1}) and (\ref{case3}) by $\hat{\rho}$. We skip the technical details.
\end{proof}

\section{Numerical experiments} \label{sec:simulation}

The main goal of this section  is to  compare the finite sample behavior of the MLE  of the covariance parameters of  model (\ref{exp:cov})
with the asymptotic distribution given in Section \ref{sec:norm}.
We consider two possible scenarios for our simulation study:
\begin{enumerate}
	\item The variances parameters are known and we estimate jointly $\rho_0$ and $\theta_0$.
	\item We estimate jointly all the parameters $\sigma_{01}^2$, $\sigma_{02}^2$,  $\rho_0$ and $\theta_0$.
\end{enumerate}

Under the  first  scenario
we simulate, using the Cholesky decomposition, $1000$ realizations from
a bivariate zero mean stochastic process with covariance model (\ref{exp:cov}) observed on $n=200, 500$ points uniformly distributed in $[0,1]$.
We simulate fixing $\sigma_{01}^2=\sigma_{02}^2=1$ and  increasing values for the colocated correlation parameter 
and the scale parameter, that is 
$\rho_0=0, 0.2, 0.5$ and $\theta_0=3/x$ with $x=0.2, 0.4, 0.6$.  Note that $\theta_0$ is 
is parametrized in terms of practical range that is the correlation is lower than $0.05$ when the distance between the points is greater than $x$.
For each simulated realization, we compute $\hat{\rho}_i$ and $\hat{\theta}_i$, $i=1,\ldots,1000$, i.e.  the MLE of the colocated correlation and scale parameters.
Using the asymptotic  distribution given   in Equation (\ref{res2}), Tables \ref{tab1}, \ref{tab2} compare the empirical quantiles of 
order $0.05, 0.25, 0.5, 0.75, 0.95$ of $[\sqrt{n}(\hat{\theta}_i-\theta_0)/ \sqrt{\theta_0^2(1+\rho_0^2)}]_{i_=1}^{1000}$
and $[\sqrt{n}(\hat{\rho}_i-\rho_0)/ \sqrt{(\rho_0^2-1)^2}]_{i_=1}^{1000}$ respectively, 
with the theoretical quantiles of the standard Gaussian distribution when  $n=200,500$.
The  simulated variances of  $\hat{\rho}_i$ and $\hat{\theta}_i$  for $i=1,\ldots,1000$ are also reported. 

As a general comment, it can be noted that the asymptotic approximation given in Equation  (\ref{res2}) improves
and the variances of the MLE of $\rho_0$ and $\theta_0$ decrease when increasing $n$ from $200$ to $500$.
When $n=500$ the asymptotic approximation works  very well.


Under the second scenario we set $\sigma_{01}^2 =\sigma_{02}^2=0.5$ and the other parameters as in Scenario 1.
In this case  we simulate, using Cholesky decomposition, $1000$ realizations from
a bivariate zero mean stochastic process with covariance model (\ref{exp:cov}) observed on $n=500, 1000$ points uniformly distributed in $[0,1]$.
For each simulated realization, we obtain $\hat{\sigma}^2_{1i}$, $\hat{\sigma}^2_{2i}$
$\hat{\rho}_i$ and $\hat{\theta}_i$, $i=1,\ldots,1000$ the MLE of  the two  variances, the colocated correlation and scale parameters.
Using the asymptotic  distribution given in  Equation (\ref{res3}), Tables \ref{tab3}, \ref{tab4}, \ref{tab5} compare the empirical quantiles of 
order $0.05, 0.25, 0.5, 0.75, 0.95$ of  $[\sqrt{n}(\hat{\sigma}^2_{1i}\hat{\theta}_i- \sigma^2_{01}\theta_0 )/ \sqrt{2(\sigma^2_{01}\theta_0)^2}]_{i_=1}^{1000}$, $[\sqrt{n}(\hat{\sigma}^2_{2i}\hat{\theta}_i- \sigma^2_{02}\theta_0 )/ \sqrt{2(\sigma^2_{02}/\theta_0)^2}]_{i_=1}^{1000}$ and $[\sqrt{n}(\hat{\rho}_i-\rho_0)/ \sqrt{(\rho_0^2-1)^2}]_{i_=1}^{1000}$ respectively, for $n=500, 1000$
with the  theoretical  quantiles of the standard Gaussian distribution.
The  simulated variances of   $\hat{\sigma}^2_{1i}\hat{\theta}_i$ ,  $\hat{\sigma}^2_{2i}\hat{\theta}_i$ and $\hat{\rho}_i$ and for $i=1,\ldots,1000$ are also reported. 
As in the previous Scenario,  the asymptotic approximation given in Equation  (\ref{res3}) improves
and the variances of the MLE of $\rho_0$ and $\sigma_{0i}^2\theta_0$, $i=1,2$ reduce when increasing $n$ from $500$ to $1000$.
When $n=1000$ the asymptotic approximation is quite satisfactory,
with the exception of the case $\rho_0=0.5$ 
where some problems of convergence on the tails of the distributions can be noted,
in particular when  $\theta_0=3/0.4,3/0.6$.\\\\

\begin{table}
	\begin{footnotesize}
		\begin{tabular}{|c| c| c| c| c| c| c| c| c| }
			\hline
			$n$&  $\theta_0$ & $\rho_0$  & 5$\%$ & 25$\%$ & 50$\%$ & 75$\%$ & 95$\%$ & Var \\ 
			\hline \hline
			200 & 3/0.2& 0 & -1.6070 & -0.6521& -0.0335&  0.6812&  1.7225& 0.0051 \\
			500 & 3/0.2 & 0 &-1.6416 & -0.6255&  0.0022&  0.6675&  1.6499& 0.0019 \\
			\hline
			200 & 3/0.2 & 0.2 & -1.6755 & -0.6749 & -0.0161 &  0.7149 & 1.6455  &  0.0048\\
			500 & 3/0.2 & 0.2 & -1.6336 &-0.6786 & -0.0113 & 0.6712 & 1.6361& 0.0018 \\
			\hline
			200 & 3/0.2 & 0.5 & -1.7768 &-0.6809 & -0.0232 &  0.6583 & 1.6119 &  0.0030 \\
			500 & 3/0.2 & 0.5 & -1.6586 & -0.6490 &  0.0146 &  0.6321 &  1.6709 & 0.0011 \\
			\hline
			200 & 3/0.4 & 0&-1.6185& -0.6531& -0.0292&  0.6852& 1.7259&  0.0051   \\
			500 & 3/0.4 & 0&-1.6454 &-0.6248& -0.0029&  0.6616&  1.6457& 0.0019 \\
			\hline
			200 & 3/0.4 & 0.2& -1.6781 &-0.6688 & -0.0031 & 0.7142 & 1.6576 &  0.0048  \\
			500 & 3/0.4 & 0.2& -1.6291 & -0.6750 & -0.0059 &  0.6755 &  1.6629 & 0.0018 \\
			\hline
			200 & 3/0.4 & 0.5& -1.7716 & -0.6874 & -0.0282 &  0.6580 & 1.6226 & 0.0030 \\
			500 & 3/0.4 & 0.5&  -1.6436 & -0.6534 &  0.0082 &  0.6270 &  1.6788 & 0.0011 \\
			\hline
			200 & 3/0.6 & 0 & -1.6179& -0.6554& -0.0288&  0.6845&  1.7200& 0.0051 \\
			500 & 3/0.6 & 0&  -1.6487& -0.6466& -0.0019&  0.6645&  1.6513& 0.0019  \\
			\hline 
			200 & 3/0.6 & 0.2 & -1.6908 &-0.6694 &-0.0088 & 0.7120 & 1.6681 & 0.0048\\
			500 & 3/0.6 & 0.2 & -1.6286 & -0.6767 & -0.0111 &  0.6704 &  1.6608 & 0.0018\\
			\hline
			200 & 3/0.6 & 0.5 &  -1.7810 & -0.6950 & -0.0354 &  0.6642 & 1.6121 & 0.0030\\
			500 & 3/0.6 & 0.5 &  -1.6407 & -0.6537 &  0.0073 &  0.6255 &  1.6686 & 0.0011\\
			\hline
			\multicolumn{3}{|c|}{$\mathcal{N} (0,1)$} &-1.6448 & -0.6744 &  0 &  0.6744 & 1.6448&\\
			\hline 
		\end{tabular}
	\end{footnotesize}
	\caption{For scenario 1: empirical quantiles, and variances of simulated MLE of $\rho_0$ for different values of $\rho_0$ and $\theta_0$, when $n=200, 500$.}\label{tab1}
\end{table}

\begin{table}
	\begin{footnotesize}
		\begin{tabular}{|c| c| c| c| c| c| c| c| c| }
			\hline
			$n$&   $\theta_0$& $\rho_0$  & 5$\%$ & 25$\%$ & 50$\%$ & 75$\%$ & 95$\%$ & Var \\ 
			\hline \hline
			200 & 3/0.2 & 0 & -1.6567& -0.7382& -0.0978&  0.6805&  1.7761& 2.50e-05  \\
			500 & 3/0.2 & 0 & -1.6838& -0.7447& -0.0469&  0.6684&  1.6369 & 9.23e-06\\
			\hline
			200 & 3/0.2  & 0.2 & -1.6176 & -0.7432 & -0.0651&  0.6583&  1.8583 & 2.61e-05\\
			500 & 3/0.2 & 0.2 & -1.6962 &-0.7370 &-0.0260 &  0.6533 & 1.6414 & 9.61e-06\\
			\hline
			200 & 3/0.2 & 0.5 & -1.6032 &-0.7028 &-0.0725 & 0.6689 &  1.8607 & 3.12e-05 \\
			500 & 3/0.2 & 0.5 & -1.6530 & -0.7169 & -0.0600 &  0.6758 &  1.6320 & 1.13e-05 \\
			\hline
			200 & 3/0.4 & 0& -1.5910& -0.7551& -0.0907&  0.6715&  1.8092& 9.68e-05  \\
			500 & 3/0.4 & 0& -1.6852& -0.7522& -0.0367&  0.6661&  1.6850& 3.64e-05\\
			\hline
			200 & 3/0.4 & 0.2& -1.6073 &-0.7242 &-0.0731 &  0.6261&  1.7977 & 1.01e-04 \\
			500 & 3/0.4 & 0.2& -1.6841 & -0.7469 & -0.0217 &  0.6649 &  1.6060 & 3.79e-05\\
			\hline
			200 & 3/0.4 & 0.5&   -1.5561 & -0.6992 &-0.0599 & 0.6578 & 1.8200 & 1.02e-04 \\
			500 & 3/0.4 & 0.5&  -1.6410 & -0.7191 & -0.0577 &  0.6772 & 1.6024 &  4.48e-05\\
			\hline
			200 & 3/0.6 & 0 & -1.5563& -0.7307& -0.0847&  0.6711&  1.8093& 2.15e-04\\
			500 & 3/0.6 & 0 & -1.6737& -0.7421& -0.0352&  0.6635&  1.6752&  8.16e-05\\
			\hline 
			200 & 3/0.6 & 0.2 & -1.5693 &-0.7187 &-0.0694 & 0.6130 & 1.8244 & 2.01e-04\\
			500 & 3/0.6 & 0.2 &  -1.6821 & -0.7473 & -0.0373 &  0.6579 &  1.6315 & 8.49e-05\\
			\hline
			200 & 3/0.6 & 0.5 &  -1.5666 & -0.6765 & -0.0638 &  0.6659 &  1.8175 & 2.05e-04\\
			500 & 3/0.6 & 0.5 & -1.6373 & -0.7232 & -0.0566 &  0.6669 &  1.6208 & 1.03e-04\\
			\hline
			\multicolumn{3}{|c|}{$\mathcal{N} (0,1)$} &-1.6448 & -0.6744 &  0 &  0.6744 & 1.6448&\\
			\hline 
		\end{tabular}
	\end{footnotesize}
	\caption{For scenario 1: empirical quantiles, and variances of simulated MLE of $\theta_0$ for different values of $\rho_0$ and $\theta_0$, when $n=500, 1000$.}\label{tab2}
\end{table}

\begin{table}
	\begin{footnotesize}
		\begin{tabular}{|c| c| c| c| c| c| c| c| c| }
			\hline
			$n$&   $\theta_0$& $\rho_0$  & 5$\%$ & 25$\%$ & 50$\%$ & 75$\%$ & 95$\%$ & Var \\ 
			\hline \hline
			500 & 3/0.2  & 0& -1.4333& -0.5971&  0.0547&  0.7163&  1.7152& 0.2100  \\
			1000 & 3/0.2  & 0& -1.6085& -0.6291&  0.0338&  0.7331&  1.65266& 0.1102\\
			\hline
			500 & 3/0.2  & 0.2 & -1.4331& -0.5964&  0.0535&  0.7160&  1.7142& 0.2106  \\
			1000 & 3/0.2  & 0.2 &-1.6022& -0.6257&  0.0356&  0.7348&  1.6526& 0.1095 \\
			\hline
			500 & 3/0.2  & 0.5 & -1.4333& -0.5945&  0.0520& 0.7163&  1.7151& 0.2098\\
			1000 & 3/0.2  & 0.5 & -1.6115& -0.6327&  0.0336&  0.7339&  1.6501&  0.1110 \\
			\hline
			500 & 3/0.4 & 0& -1.4277& -0.5827&  0.0427&  0.6999&  1.6847& 0.0519 \\
			1000 & 3/0.4  & 0& -1.6158& -0.6364&  0.0370&  0.7277&  1.6263& 0.0275\\
			\hline
			500 & 3/0.4  & 0.2& -1.4276& -0.5799&  0.0427&  0.6999&  1.6844& 0.0518 \\
			1000 & 3/0.4  & 0.2& -1.6109& -0.6299&  0.0459&  0.7412&  1.6357& 0.0276  \\
			\hline
			500 & 3/0.4  & 0.5&  -1.4276 & -0.5827&  0.0387&  0.6938&  1.6842& 0.0517  \\
			1000 & 3/0.4  & 0.5& -1.6090& -0.6275&  0.0380&  0.7402&  1.6346& 0.0275 \\
			\hline
			500 & 3/0.6 & 0&  -1.4229& -0.5847&  0.0406&  0.6995&  1.6997& 0.0228 \\
			1000 & 3/0.6 & 0 & -1.6241& -0.6314&  0.0393&  0.7411&  1.6377& 0.0123  \\
			\hline 
			500 & 3/0.6 & 0.2 & -1.4235& -0.5833&  0.0433&  0.7090&  1.6999& 0.0228\\
			1000 & 3/0.6 & 0.2 & -1.6234& -0.6318&  0.0343&  0.7377&  1.6365& 0.0123 \\
			\hline
			500 & 3/0.6 & 0.5 &  -1.4235& -0.5833& 0.0433 &  0.7090&  1.6999 & 0.0228\\
			1000 &3/0.6 & 0.5 & -1.6234& -0.6318&  0.0343&  0.7377&  1.6365&  0.0123\\
			\hline
			\multicolumn{3}{|c|}{$\mathcal{N} (0,1)$} &-1.6448 & -0.6744 &  0 &  0.6744 & 1.6448&\\
			\hline 
		\end{tabular}
	\end{footnotesize}
	\caption{For scenario 2: empirical quantiles, and variances of simulated MLE of $\sigma^2_{01}\theta_0$ for different values of $\rho_0$ and $\theta_0$, when $n=500, 1000$.}\label{tab3}
\end{table}

\begin{table}
	\begin{footnotesize}
		\begin{tabular}{|c| c| c| c| c| c| c| c| c| }
			\hline
			$n$&  $\theta_0$ & $\rho_0$  & 5$\%$ & 25$\%$ & 50$\%$ & 75$\%$ & 95$\%$ & Var \\ 
			\hline \hline
			500 &  3/0.2 & 0 & -1.5318& -0.6282&  0.0544&  0.7382&  1.8544& 0.2336  \\
			1000 &  3/0.2 & 0 & -1.5134& -0.6382&  0.0628&  0.7003&  1.7527& 0.1150 \\
			\hline
			500 & 3/0.2 & 0.2 & -1.5067&-0.6272&  0.0411&  0.7359&  1.7854 & 0.2364\\
			1000 & 3/0.2 & 0.2 & -1.4653& -0.6415&  0.0728&  0.7239&  1.7743& 0.1155\\
			\hline
			500 &  3/0.2 & 0.5 &  -1.4734& -0.6078&  0.0308&  0.7732&  1.8493& 0.2336 \\
			1000 &  3/0.2 & 0.5 & -1.4260& -0.6438&  0.0192&  0.7809&  1.7520& 0.1149\\
			\hline
			500 & 3/0.4 & 0& -1.5173&-0.6479& 0.0598&  0.7225&  1.8452& 0.0578\\
			1000 & 3/0.4 & 0& -1.5014& -0.6395& 0.0604&  0.6989&  1.7377 & 0.0287\\
			\hline
			500 & 3/0.4& 0.2& -1.5164& -0.6275&  0.0553&  0.7537&  1.7436 & 0.0580  \\
			1000 & 3/0.4 & 0.2& -1.4724& -0.6442&  0.0494&  0.7260&  1.7822& 0.0288 \\
			\hline
			500 & 3/0.4& 0.5&  -1.4877 & -0.6099 &  0.0252 &  0.7725 &  1.7729 &  0.0581   \\
			1000 & 3/0.4 & 0.5& -1.4488 &-0.6495&  0.0117&  0.7565&  1.7381&  0.0287 \\
			\hline
			500 & 3/0.6& 0 & -1.5448& -0.6447&  0.0705&  0.7226&  1.8264& 0.0257  \\
			1000 & 3/0.6 & 0 & -1.4940& -0.6560& 0.0548& 0.7055&  1.7365& 0.0128 \\
			\hline 
			500 & 3/0.6& 0.2 & -1.5122& -0.6379&  0.0668&  0.7553&  1.7310& 0.0257\\
			1000 & 3/0.6 & 0.2 & -1.4466& -0.6450&  0.0541&  0.7316&  1.7923& 0.0128\\
			\hline
			500 & 3/0.6& 0.5 & -1.4768& -0.6128&  0.0325&  0.7605&  1.7396 & 0.0258 \\
			1000 & 3/0.6 & 0.5 &-1.4464& -0.6549& -0.0115&  0.7551&  1.7464&  0.0128  \\
			\hline
			\multicolumn{3}{|c|}{$\mathcal{N} (0,1)$} &-1.6448 & -0.6744 &  0 &  0.6744 & 1.6448&\\
			\hline 
		\end{tabular}
	\end{footnotesize}
	\caption{For scenario 2: empirical quantiles, and variances of simulated MLE of $\sigma^2_{02}\theta_0$ for different values of $\rho_0$ and $\theta_0$, when $n=500, 1000$.}\label{tab4}
\end{table}

\begin{table}
	\begin{footnotesize}
		\begin{tabular}{|c| c| c| c| c| c| c| c| c| }
			\hline
			$n$&  $\theta_0$ & $\rho_0$  & 5$\%$ & 25$\%$ & 50$\%$ & 75$\%$ & 95$\%$ & Var \\ 
			\hline \hline
			500 & 3/0.2 & 0 & -1.6477& -0.6271&  0.0016&  0.6795&  1.6786& 0.0019  \\
			1000 & 3/0.2 & 0& -1.7235& -0.6167&  0.0516&  0.6975&  1.7051 & 0.0010\\
			\hline
			500 &  3/0.2 & 0.2 & -1.6431& -0.6714&  0.0037&  0.6518&  1.6418& 0.0018\\
			1000 & 3/0.2 & 0.2 &-1.6620& -0.5992&  0.0460&  0.6906&  1.6757& 0.0009  \\
			\hline
			500 &  3/0.2& 0.5 & -1.6193& -0.6434&  0.0123&  0.6220&  1.6585& 0.0011\\
			1000 & 3/0.2 & 0.5 & -1.6582& -0.6445&  0.0563&  0.6729&  1.5996& 0.0005  \\
			\hline
			500 & 3/0.4& 0& -1.6486& -0.6283& -0.0091&  0.6684&  1.6600& 0.0019 \\
			1000 & 3/0.4 & 0& -1.7296& -0.6209&  0.0365&  0.6967&  1.7151& 0.0010 \\	
			\hline
			500 & 3/0.4& 0.2& -1.6407& -0.6589& -0.0074&  0.6509&  1.6631& 0.0018  \\
			1000 & 3/0.4 & 0.2& -1.6840& -0.6067&  0.0253&  0.6845&  1.6823& 0.0009\\	
			\hline
			500 & 3/0.4& 0.5& -1.6160 &-0.6529& -0.0045&  0.5987&  1.6543& 0.0010\\
			1000 & 3/0.4 & 0.5& -1.6669& -0.6434&  0.0577&  0.6734&  1.6171& 0.0005\\	
			\hline
			500 & 3/0.6& 0 &  -1.6504& -0.6280& -0.0092&  0.6890&  1.6550 & 0.0019\\
			1000 & 3/0.6 & 0 & -1.7330& -0.6214& 0.0370&  0.6931&  1.7297 & 0.0010 \\	
			\hline 
			500 & 3/0.6& 0.2 & -1.6412& -0.6525&  0.0050&  0.6653&  1.6603& 0.0018 \\
			1000 & 3/0.6 & 0.2 & -1.7102& -0.6111&  0.0201&  0.6738&  1.6908& 0.0009 \\	
			\hline
			500 & 3/0.6& 0.5 & -1.6536& -0.6510&  0.0070&  0.6169&  1.6561& 0.0011 \\
			1000 & 3/0.6 & 0.5 & -1.6776& -0.6496&  0.0617&  0.6714&  1.6175&  0.0005   \\	
			\hline
			\multicolumn{3}{|c|}{$\mathcal{N} (0,1)$} &-1.6448 & -0.6744 &  0 &  0.6744 & 1.6448&\\
			\hline 
		\end{tabular}
	\end{footnotesize}
	\caption{For scenario 2: empirical quantiles, and variances of simulated MLE of  $\rho_0$ for different values of $\rho_0$ and $\theta_0$, when $n=500, 1000$.}\label{tab5}
\end{table}

\section{Concluding remarks}
In this paper we considered the fixed domain asymptotic properties of the MLE for a bivariate zero mean Gaussian process
with a separable exponential covariance model. We characterized the equivalence of Gaussian measures under this model
and we established the consistency and the asymptotic distribution of the MLE of the microergodic parameters.
Analogue results under increasing domain asymptotics are obtained by \cite{BevValVel2015}.
It is interesting to note that the asymptotic distribution of the MLE of the colocated correlation parameter, between the two processes,
does not depend on the asymptotic framework.

Our results can be extended in different directions. 
Let ${\cal M}(h,\nu,\theta)=  \frac{2^{1-\nu}}{\Gamma(\nu)} \left (||h||\theta\right )^{\nu} {\cal K}_{\nu} \left (||h||\theta  \right )$,
$h\in \mathbb{R}^d$, $\nu, \theta>0$, be the Matérn correlation model.
A generalization of the bivariate covariance model   (\ref{exp:cov}) is then the following model:
$$
Cov(Z_{i}(s),Z_{j}(s+h);\psi)=\sigma_{i}\sigma_{j}(\rho +(1-\rho){\bf 1}_{i=j}) {\cal M}(h,\nu,\theta_{ij}),\quad i,j=1,2,
$$
with $\theta_{12} = \theta_{21}$, $\sigma_{1}>0$, $\sigma_{2}>0$, where in this case  ${\psi}=(\sigma_{1}^2,\sigma_{2}^2,\theta_{11},\theta_{12},\theta_{22},\nu, \rho)^{\top}$.
This is a special case of the bivariate Matérn model proposed in \cite{GneKleSch2010}. The authors give necessary and sufficient conditions
in terms of $\psi$ for the validity of this kind of  model. Studying the asymptotic properties of the MLE of $\psi$ would then be interesting. The main challenges in this case are the number of parameters involved and the fact that the covariance matrix cannot be factorized as a kronecker product.
Moreover for $\nu\neq0.5$ the markovian property of the process cannot be exploited.

Another interesting extension is to consider the  fixed domain asymptotic properties of the
tapered maximum likelihood estimator in bivariate covariance models. This method of estimation has been proposed
as a possible surrogate for the MLE when working with large data sets, see \cite{Furr:Gent:Nych:06,Kauf:Sche:Nych:08}. Asymptotic properties of this estimator, under fixed domain asymptotics and 
in the univariate case, can be found in \cite{KauSchNyc2008}, \cite{WanLoh2011} and \cite{DuZhaMan2009}. Extensions of these results to the bivariate case would be interesting.
Both topics are to be investigated in future research.


\section*{Appendix}
{\bf\textit{Proof of lemma \ref{lem:expression:likelihood}.}}\\ 

Let $\Sigma (\psi)=A\otimes R$, where the matrices $A$ and $R$ are defined in (\ref{eq:covmat}).
First, using properties of the determinant  of the Kroneker product, we have:
$$\log|\Sigma(\psi)|= log ( \left| A \right|^{n}  \left|R\right|^{2})=n \log \left[\sigma_{1}^{2} \sigma_{2}^{2} (1-\rho^{2})\right]+2\log \left|R\right|.
$$
From lemma 1 in \cite{Yin1993}, $\left| R \right|= \prod_{i=2}^{n} \left(1-e^{-2\theta
	\Delta_{i}}\right)$. Then, we have
\begin{equation}\label{adet}
\log|\Sigma(\psi)|=
n \log \left[\sigma_{1}^{2} \sigma_{2}^{2} (1-\rho^{2})\right]+2
\sum_{i=2}^{n} \log \left(1-e^{-2\theta \Delta_{i}}\right).
\end{equation}
On the other hand, since $\Sigma (\psi)^{-1}=A^{-1}\otimes R^{-1}$, we obtain
\begin{eqnarray*}
	Z_n^{\top} \left[ \Sigma (\psi)\right]^{-1} Z_n
	&=& \left[ Z_{1,n}^{\top},Z_{2,n}^{\top}\right]\left[\left(
	\begin{array}{cc}\frac{1}{\sigma_{1}^{2}(1-\rho^{2})}R^{-1}&-\frac{\rho}{\sigma_{1}\sigma_{2}(1-\rho^{2})}R^{-1}\\
		-\frac{\rho}{\sigma_{1}\sigma_{2}(1-\rho^{2})}R^{-1}&\frac{1}{\sigma_{2}^{2}(1-\rho^{2})}R^{-1}
	\end{array}\right) \right]\left[\begin{array}{c}
	Z_{1,n}\\Z_{2,n}\end{array} \right]\\
&=&\frac{1}{(1-\rho^{2})}\left\{\frac{1}{\sigma_{1}^{2}}
Z_{1,n}^{\top}R^{-1}Z_{1,n}+\frac{1}{\sigma_{2}^{2}}
Z_{2,n}^{\top}R^{-1}Z_{2,n} \right.\\ &&\,\,\,\,\,\,\,\,\,\,\,\,\,\,\,\,\,\,\,\,\,\,\,\,\,\,\,\,\,\,\left.-\frac{\rho}{\sigma_{1}\sigma_{2}}\left(
Z_{2,n}^{\top}R^{-1}Z_{1,n}+Z_{1,n}^{\top}R^{-1}Z_{2,n}\right) \right\}.
\end{eqnarray*}
Then using Lemma 1 in \cite{Yin1993} (Eq. 4.2) we obtain:
\begin{eqnarray}
Z_n^{\top} \left[ \Sigma (\psi)\right]^{-1}
Z_n&=&\frac{1}{(1-\rho^{2})}\left\{\sum_{k=1}^{2}\frac{1}{\sigma_{k}^{2}}
\left(z_{k,1}^{2}+\sum_{i=2}^{n} \frac{\left(z_{k,i}-e^{-\theta
		\Delta_{i}}z_{k,i-1}\right)^{2}}{1-e^{-2\theta\Delta_{i}}}\right)\right.\label{efcom} \\
&&-\left.
\frac{2\rho}{\sigma_{1}\sigma_{2}}\left(z_{1,1}z_{2,1}+\sum_{i=2}^{n} \frac{\left(z_{1,i}-e^{-\theta
		\Delta_{i}}z_{1,i-1}\right)\left(z_{2,i}-e^{-\theta
		\Delta_{i}}z_{2,i-1}\right)}{1-e^{-2\theta \Delta_{i}}}
\right) \right\}.\nonumber 
\end{eqnarray}

Combining (\ref{log}), (\ref{adet}) and (\ref{efcom}), we obtain
\begin{eqnarray*}
	l_{n}(\psi)&=&n \left[\log (2\pi)+\log(1-\rho^{2})\right]+\sum_{k=1}^{2} \log
	(\sigma_{k}^{2}) + \sum_{k=1}^{2} \sum_{i=2}^{n} \log
	\left[\sigma_{k}^{2}\left(1-e^{-2\theta \Delta_{i} }\right)\right]
	\\
	&&+\frac{1}{1-\rho^{2}}\left\{ \sum_{k=1}^{2}  \frac{1}{\sigma_{k}^{2}}
	\left(z_{k,1}^{2}+\sum_{i=2}^{n} \frac{\left(z_{k,i}-e^{-\theta
			\Delta_{i}}z_{k,i-1}\right)^{2}}{1-e^{-2\theta \Delta_{i}}}
	\right)\right.\\
	&&\,\,\,\,\,\,\,\,\,\,\,\,\,\,\,\,\,\,\,\,\,\,\,\,\,\,\,\,\,-\left.
	\frac{2\rho}{\sigma_{1}\sigma_{2}}\left(z_{1,1}z_{2,1}+\sum_{i=2}^{n} \frac{\left(z_{1,i}-e^{-\theta
			\Delta_{i}}z_{1,i-1}\right)\left(z_{2,i}-e^{-\theta
			\Delta_{i}}z_{2,i-1}\right)}{1-e^{-2\theta \Delta_{i}}}
	\right) \right\}.
\end{eqnarray*}
{\bf \textit{Proof of lemma \ref{lem:cross:CLT}.}}\\ By differentiation of $L(\theta)$ with respect to $\theta$ we obtain
\begin{eqnarray*}
	G & = & \sum_{i=2}^n \frac{
		\Delta_i e^{-\theta \Delta_i}z_{1,i-1} (z_{2,i} - e^{-\theta \Delta_i}z_{2,i-1})
		+(z_{1,i} - e^{-\theta \Delta_i}z_{1,i-1})  \Delta_i e^{-\theta \Delta_i}z_{2,i-1}}
	{1-e^{-2 \theta \Delta_i}} \\
	&  & -\sum_{i=2}^n \frac{ (z_{1,i} - e^{-\theta \Delta_i}z_{1,i-1})
		(z_{2,i} - e^{-\theta \Delta_i}z_{2,i-1})
		2 \Delta_i e^{-2 \theta \Delta_i}
	}{(1-e^{-2 \theta \Delta_i})^2.}
	= G_1 - G_2,
\end{eqnarray*}
say. Let us first show that $G_1 = O_{up}(1)$. Let for $i=2,...,n$, $A_{\theta,i} =  \Delta_i e^{- \theta \Delta_i}/(1-e^{-2 \theta \Delta_i})$. By symmetry of $Z_{1,n}$ and $Z_{2,n}$, in order to show $G_1 = O_{up}(1)$, it is sufficient to show that
\begin{equation} \label{eq:for:Gun}
\sum_{i=2}^n A_{\theta,i}
z_{1,i-1} (z_{2,i} - e^{-\theta \Delta_i}z_{2,i-1}) = O_{up}(1).
\end{equation}
We have
\begin{eqnarray*} \label{eq:intro:Tun:Tdeux}
	\sum_{i=2}^n 
	A_{\theta,i} z_{1,i-1} (z_{2,i} - e^{-\theta \Delta_i}z_{2,i-1}) & = &
	\sum_{i=2}^n A_{\theta,i} z_{1,i-1} (z_{2,i} - e^{-\theta_0 \Delta_i}z_{2,i-1}) \nonumber \\
	& & 
	+
	\sum_{i=2}^n 
	A_{\theta,i} z_{1,i-1}z_{2,i-1}  ( e^{-\theta_0 \Delta_i} - e^{-\theta \Delta_i}) \nonumber \\
	& = & T_1 + T_2,
\end{eqnarray*}
say. Now, one can see from Taylor expansions, and since $\theta \in \Theta$ with $\Theta$ compact in $(0,\infty)$, that
\[	S:= \sup_{\theta \in \Theta} \sup_{n \in \mathbb{N}, i=2,...,n} \left| \frac{A_{\theta,i} (e^{- \theta_0 \Delta_i} - e^{- \theta \Delta_i})}{\Delta_i}\right|  < \infty.\]
Hence
\begin{eqnarray*}
	| T_2 |  & \leq & \sup_{t \in [0,1]} | Z_1(t)Z_2(t) |
	S \sum_{i=1}^n \Delta_i \\
	& = & O_{up}(1).
\end{eqnarray*} 
Let us now consider $T_1$. We have, for any $k<i$
\begin{flalign} \label{eq:for:Tun}
& \mathbb{E} \left(\{ z_{1,i-1} (z_{2,i} - e^{-\theta_0 \Delta_i}z_{2,i-1})
z_{1,k-1} (z_{2,k} - e^{-\theta_0 \Delta_k}z_{2,k-1}) \right\}
& \nonumber \\
& = \mathbb{E} \left\{ 
\mathbb{E} \left[ z_{1,i-1} (z_{2,i} - e^{-\theta_0 \Delta_i}z_{2,i-1})
z_{1,k-1} (z_{2,k} - e^{-\theta_0 \Delta_k}z_{2,k-1}) |
z_{1,1},...,z_{1,i-1},z_{2,1},...,z_{2,i-1}
\right]
\right\} & \nonumber \\
& = \mathbb{E} \left\{  z_{1,i-1} z_{1,k-1} (z_{2,k} - e^{-\theta_0 \Delta_k}z_{2,k-1}) 
\mathbb{E} \left[ (z_{2,i} - e^{-\theta_0 \Delta_i}z_{2,i-1})
|
z_{1,1},...,z_{1,i-1},z_{2,1},...,z_{2,i-1}
\right]
\right\}. &
\end{flalign}
Let us show that $\mathbb{E} \left[ z_{2,i}
|z_{1,1},...,z_{1,i-1},z_{2,1},...,z_{2,i-1}\right]
=  e^{-\theta_0 \Delta_i}z_{2,i-1}$.
Let $r$ be the $1\times(i-1)$ vector $(e^{-(s_i-s_1) \theta_0 },e^{-(s_i-s_2) \theta_0 },...,e^{- (s_i-s_{i-1}) \theta_0 })^{\top}$, let $R = [e^{-|s_a-s_b| \theta_0 })]_{a,b=1 }^{i-1}$ and let $V_k = (z_{k,1},...,z_{k,i-1})^{\top}$ for $k=1,2$. Then
\begin{flalign*}
& \mathbb{E} \left[ z_{2,i}|z_{1,1},...,z_{1,i-1},z_{2,1},...,z_{2,i-1}\right] & \\
& = \mathbb{E} \left[ z_{2,i}|z_{1,1}/\sigma_{01},...,z_{1,i-1}/\sigma_{01},z_{2,1}/\sigma_{02},...,z_{2,i-1}/\sigma_{02}\right] & \\
& =  [\rho_0 \sigma_{02} r^{\top}, \sigma_{02} r^{\top}]\left[
\begin{pmatrix}
1 & \rho_0 \\
\rho_0 & 1\end{pmatrix}^{-1}\otimes R^{-1}\right]
\left[\begin{matrix}
(1/\sigma_{01}) V_1 \\
(1/\sigma_{02}) V_2
\end{matrix}\right] & \\& = [\rho_0 \sigma_{02} r^{\top}, \sigma_{02} r^{\top}]
\left[\begin{matrix}
\frac{1}{1- \rho_0^2} R^{-1} & \frac{- \rho_0}{1- \rho_0^2} R^{-1} \\
\frac{- \rho_0}{1- \rho_0^2} R^{-1} &  \frac{1}{1- \rho_0^2} R^{-1}
\end{matrix}
\right]
\left[\begin{matrix}(1/\sigma_{01}) V_1 \\
(1/\sigma_{02}) V_2
\end{matrix}
\right] & \\
& = \frac{1}{1- \rho_0^2} \left( \rho_0 \sigma_{02} r^{\top} R^{-1} V_1/\sigma_{01} - \rho_0^2 \sigma_{02} r^{\top} R^{-1} V_2/\sigma_{02} - \rho_0 \sigma_{02} r^{\top} R^{-1} V_1/\sigma_{01} + \sigma_{02} r^{\top} R^{-1} V_2 /\sigma_{02} \right) & \\
& =  r^{\top} R^{-1} V_2 . & 
\end{flalign*}
Now, it is well known from the Markovian property of $Z_2$ that $r^{\top} R^{-1} V_2 = e^{- \theta_0 \Delta_i} z_{2,i-1}$. Hence, we have $\mathbb{E} \left[ z_{2,i}|
z_{1,1},...,z_{1,i-1},z_{2,1},...,z_{2,i-1}\right]
=  e^{-\theta_0 \Delta_i}z_{2,i-1}$, which together with $\eqref{eq:for:Tun}$ gives  
\[\mathbb{E} \left(\{ z_{1,i-1} (z_{2,i} - e^{-\theta_0 \Delta_i}z_{2,i-1})z_{1,k-1} (z_{2,k} - e^{-\theta_0 \Delta_k}z_{2,k-1}) \right\}=0 \]
for $k <i$. Hence
\begin{eqnarray} \label{eq:after:proving:decorrelation}
\mathbb{E} \left( T_1^2 \right) & = &
\mathbb{E} \left( \left[
\sum_{i=2}^n 
A_{\theta,i} z_{1,i-1} (z_{2,i} - e^{-\theta_0 \Delta_i}z_{2,i-1}) \right]^2 \right) \nonumber \\
& = &  
\sum_{i=2}^n 
\mathbb{E} \left(
A_{\theta,i}^2 z_{1,i-1}^2 (z_{2,i} - e^{-\theta_0 \Delta_i}z_{2,i-1})^2\right). 
\end{eqnarray}
Now, one can see from Taylor expansions that
\[	S' := \sup_{\theta \in \Theta} \sup_{n \in \mathbb{N}, i=2,...,n} \left| A_{\theta,i} \right|  < \infty.	\]
Hence
\begin{eqnarray*}
	\mathbb{E} \left( T_1^2 \right)
	& \leq &
	S' \sum_{i=2}^n 
	\sqrt{ \mathbb{E} \left(
		z_{1,i-1}^4 \right) }
	\sqrt{ \mathbb{E} \left(
		(z_{2,i} - e^{-\theta_0 \Delta_i}z_{2,i-1})^4\right) } \\
	& = & S' \sum_{i=2}^n \sqrt{3} \sigma_{01}^2 \sqrt{3} \sigma_{02}^2 (1-e^{-2 \theta_0 \Delta_i}).
\end{eqnarray*}
One can see that
\[	S'' := \sup_{\theta \in \Theta} \sup_{n \in \mathbb{N}, i=2,...,n} \left| \frac{(1-e^{-2 \theta_0 \Delta_i})}{\Delta_i} 
\right|  < \infty.\]
Hence, 
\begin{equation} \label{eq:finishing:Tun}
\mathbb{E} \left( T_1^2 \right) \leq S' S'' 3 \sigma_{01}^2 \sigma_{02}^2 \sum_{i=2}^n \Delta_i = O_p(1).
\end{equation}
Hence $T_1 = 0_{up}(1)$ and \eqref{eq:for:Gun} is proved. Hence, we have, with
\[	B_{\theta,i} = \frac{2 \Delta_i e^{- 2 \theta \Delta_i}}{(1-e^{- 2 \theta \Delta_i})^2},
\]
\begin{eqnarray} \label{eq:for:G}
G & = & O_{up}(1) - \sum_{i=2}^n (z_{1,i} - e^{-\theta \Delta_i}z_{1,i-1})
(z_{2,i} - e^{-\theta \Delta_i}z_{2,i-1})B_{\theta,i}.
\end{eqnarray}
Furthermore, using $a_{\theta} b_{\theta} c_{\theta} = a_{\theta_0} b_{\theta_0} c_{\theta} - a_{\theta_0} b_{\theta_0} c_{\theta} + a_{\theta_0} b_{\theta} c_{\theta} - a_{\theta_0} b_{\theta} c_{\theta} + a_{\theta} b_{\theta} c_{\theta}$, we have
\begin{flalign} \label{eq:atheta:btheta}
& \sum_{i=2}^n(z_{1,i} - e^{-\theta \Delta_i}z_{1,i-1})(z_{2,i} - e^{-\theta \Delta_i}z_{2,i-1}) B_{\theta,i} & \nonumber \\
& = \sum_{i=2}^n(z_{1,i} - e^{-\theta_0 \Delta_i}z_{1,i-1})(z_{2,i} - e^{-\theta_0 \Delta_i}z_{2,i-1}) B_{\theta,i}
& \nonumber \\
& \,\,\,\, + 
\sum_{i=2}^n(z_{1,i} - e^{-\theta_0 \Delta_i}z_{1,i-1})(e^{-\theta_0 \Delta_i} z_{2,i-1} - e^{-\theta \Delta_i}z_{2,i-1}) B_{\theta,i}
& \nonumber \\
&\,\,\,\, +\sum_{i=2}^n(e^{-\theta_0 \Delta_i} z_{1,i-1} - e^{-\theta \Delta_i}z_{1,i-1})(z_{2,i} - e^{-\theta \Delta_i}z_{2,i-1}) B_{\theta,i}
& \nonumber \\
& = \sum_{i=2}^n(z_{1,i} - e^{-\theta_0 \Delta_i}z_{1,i-1})(z_{2,i} - e^{-\theta_0 \Delta_i}z_{2,i-1})B_{\theta,i}
+ R_1 + R_2, &
\end{flalign}
say. We now show that $R_1,R_2 = O_{up}(1)$. For $R_1$, we have
\begin{eqnarray*}
	R_1  & = & \sum_{i=2}^n
	(z_{1,i} - e^{-\theta_0 \Delta_i}z_{1,i-1})
	(e^{-\theta_0 \Delta_i} z_{2,i-1} - e^{-\theta \Delta_i}z_{2,i-1}) B_{\theta,i}
	\\
	& = &
	\sum_{i=2}^n
	z_{2,i-1} (z_{1,i} - e^{-\theta_0 \Delta_i}z_{1,i-1})
	(e^{-\theta_0 \Delta_i}  - e^{-\theta \Delta_i}) B_{\theta,i}.
\end{eqnarray*}
As for $T_1$ in \eqref{eq:after:proving:decorrelation},
\[
\mathbb{E}(R_1^2) = 
\sum_{i=2}^n
\mathbb{E} \left( z_{2,i-1}^2 (z_{1,i} - e^{-\theta_0 \Delta_i}z_{1,i-1})^2 \right)
(e^{-\theta_0 \Delta_i}  - e^{-\theta \Delta_i})^2 B_{\theta,i}^2.
\]
One can show using Taylor expansions that 
\[
S^{(3)} := 
\sup_{\theta \in \Theta} \sup_{n \in \mathbb{N}, i=2,...,n} \left| B_{\theta,i}^2 (e^{- \theta_0 \Delta_i} - e^{- \theta \Delta_i})^2
\right|  < \infty.
\]
Hence
\[
\mathbb{E}(R_1^2) \leq 
S^{(3)} \sum_{i=2}^n
\mathbb{E} \left( z_{2,i-1}^2 (z_{1,i} - e^{-\theta_0 \Delta_i}z_{1,i-1})^2 \right) = O_u(1)
\]
as for \eqref{eq:finishing:Tun}. Hence
$R_1 = O_{up}(1)$. For $R_2$, we have
\begin{eqnarray} \label{eq:for:reusing:Gun}
R_2 & = & 
\sum_{i=2}^n
(e^{-\theta_0 \Delta_i} z_{1,i-1} - e^{-\theta \Delta_i}z_{1,i-1})
(z_{2,i} - e^{-\theta \Delta_i}z_{2,i-1})
B_{\theta,i}
\nonumber \\
& = & \sum_{i=2}^n
B_{\theta,i}
(e^{-\theta_0 \Delta_i}  - e^{-\theta \Delta_i})
z_{1,i-1}
(z_{2,i} - e^{-\theta \Delta_i}z_{2,i-1}) \nonumber \\
& = & \sum_{i=2}^n
C_{\theta,i}z_{1,i-1}
(z_{2,i} - e^{-\theta \Delta_i}z_{2,i-1}),
\end{eqnarray}say. We can thus show that $R_2 = O_{up}(1)$ as for \eqref{eq:for:Gun}. Indeed, the only difference between \eqref{eq:for:reusing:Gun} and \eqref{eq:for:Gun} is that $A_{\theta,i}$ is replaced by $C_{\theta,i}$. To show \eqref{eq:for:Gun} we only used that
\[
\sup_{\theta \in \Theta} \sup_{n \in \mathbb{N}, i=2,...,n} \left| A_{\theta,i} 
\right|  < \infty.
\] 
We can see from Taylor expansions that
\[
\sup_{\theta \in \Theta} \sup_{n \in \mathbb{N}, i=2,...,n} \left| C_{\theta,i}
\right|  < \infty.
\] 
Hence, as for \eqref{eq:for:Gun}, we can show that $R_2 = O_{up}(1)$. Hence, from \eqref{eq:for:G} and \eqref{eq:atheta:btheta}, we have,
\begin{equation} \label{eq:for:G:deux}
G = O_{up}(1) - 
\sum_{i=2}^n
(z_{1,i} - e^{-\theta_0 \Delta_i}z_{1,i-1})
(z_{2,i} - e^{-\theta_0 \Delta_i}z_{2,i-1})
B_{\theta,i}.
\end{equation}
Let, for $i=2,...,n$,
\[
X_i =   
(z_{1,i} - e^{-\theta_0 \Delta_i}z_{1,i-1})
(z_{2,i} - e^{-\theta_0 \Delta_i}z_{2,i-1})
B_{\theta,i}.
\]
For $k<i$ we have
\begin{flalign*}
& \mathbb{E} \left( (z_{1,i} - e^{-\theta_0 \Delta_i}z_{1,i-1})
(z_{2,k} - e^{-\theta_0 \Delta_k}z_{2,k-1}) \right) & \\
&  =  \rho_0 \sigma_{01} \sigma_{02}
\left(  e^{- (s_i-s_k) \theta_0 } - e^{-\theta_0 \Delta_i} e^{-(s_{i-1}-s_k)\theta_0 } -  e^{-\theta_0 \Delta_k} e^{-(s_i-s_{k-1})\theta_0 } +  e^{ - \theta_0 (\Delta_i + \Delta_k)} e^{-(s_{i-1}-s_{k-1})\theta_0 }   \right) & \\
& =  0. &
\end{flalign*}
Hence, for $k<i$ (and for $k \neq i$ by symmetry), the random variables
$(z_{1,i} - e^{-\theta_0 \Delta_i}z_{1,i-1})$ and $(z_{2,k} - e^{-\theta_0 \Delta_k}z_{2,k-1})$ are independent. In addition, the random variables
$(z_{j,i} - e^{-\theta_0 \Delta_i}z_{j,i-1})$
and $(z_{j,k} - e^{-\theta_0 \Delta_k}z_{j,k-1}) $
are also independent for $j =1,2$ and $k \neq i$. Hence, the $n-1$ Gaussian vectors $\left\{ \left[ (z_{1,i} - e^{-\theta_0 \Delta_i}z_{1,i-1}),(z_{2,i} - e^{-\theta_0 \Delta_i}z_{2,i-1}) \right] \right\}_{i=2,...,n}$
are mutually independent. Thus, the $\{ X_i \}_{i=2,...,n}$ are independent random variables.\\ 
We also have
\begin{eqnarray*}
	\sum_{i=2}^n X_i & = & \sum_{i=2}^n \left( z_{1,i} - e^{-\theta_0 \Delta_i} z_{1,i-1} \right) \left( z_{2,i} - e^{-\theta_0 \Delta_i} z_{2,i-1} \right) \frac{2 \Delta_i e^{-2 \theta \Delta_i}}{(1-e^{-2 \theta \Delta_i})^2} \\
	& = & \sum_{i=2}^n
	\frac{
		\left( z_{1,i} - e^{-\theta_0 \Delta_i} z_{1,i-1} \right) \left( z_{2,i} - e^{-\theta_0 \Delta_i} z_{2,i-1} \right)
	}
	{
		\sigma_{01} \sigma_{02} \sqrt{1+ \rho_0^2} (1-e^{-2 \theta_0 \Delta_i})
	}
	\frac{
		\sigma_{01} \sigma_{02} \sqrt{1+ \rho_0^2} (1-e^{-2 \theta_0 \Delta_i})
		2 \Delta_i e^{-2 \theta \Delta_i}
	}
	{
		(1-e^{-2 \theta \Delta_i})^2
	}.
\end{eqnarray*}	Let
\[
D_{\theta,i} = 
\frac{
	\sigma_{01} \sigma_{02} \sqrt{1+ \rho_0^2} (1-e^{-2 \theta_0 \Delta_i})
	2 \Delta_i e^{-2 \theta \Delta_i}
}
{
	(1-e^{-2 \theta \Delta_i})^2
}
\]
and let $Y_{i,n}$ be as in \eqref{eq:def:Yin}.
Then, let
\begin{eqnarray*}
	T & = & \left| 
	\sum_{i=2}^n X_i
	- 
	\left( \frac{\sigma_{01} \sigma_{02} \sqrt{1+\rho_0^2} \theta_0}{\theta^2} \right)
	\sum_{i=2}^n Y_{i,n} 
	\right| \\
	& = &
	\left|
	\sum_{i=2}^n Y_{i,n}
	\left( D_{\theta,i} - \frac{\sigma_{01} \sigma_{02} \sqrt{1+\rho_0^2} \theta_0}{\theta^2} \right)
	\right|.
\end{eqnarray*}
On the other hand 
\begin{eqnarray*}
	\mathbb{E}(Y_{i,n}) &=&\frac{\mathbb{E}(z_{1,i}z_{2,i})-e^{-\theta_{0}\Delta_{i}}\mathbb{E}(z_{1,i}z_{2,i-1})
		-e^{-\theta_{0}\Delta_{i}}\mathbb{E}(z_{1,i-1}z_{2,i})
		+e^{-2\theta_{0}\Delta_{i}}\mathbb{E}(z_{1,i-1}z_{2,i-1})  }{\sigma_{01}\sigma_{02}(1+\rho_{0}^{2})^{1/2}(1-e^{-2\theta_{0}\Delta_{i}})}
	\\
	&=&\frac{\sigma_{01}\sigma_{02}\rho_{0}	\left[1-e^{-2\theta_{0}\Delta_{i}}-e^{-2\theta_{0}\Delta_{i}}+e^{-2\theta_{0}\Delta_{i}}     \right]}{\sigma_{01}\sigma_{02}(1+\rho_{0}^{2})^{1/2}(1-e^{-2\theta_{0}\Delta_{i}})}
	\\
	&=&\frac{\sigma_{01}\sigma_{02}\rho_{0}(1-e^{-2\theta_{0}\Delta_{i}})}{\sigma_{01}\sigma_{02}(1+\rho_{0}^{2})^{1/2}(1-e^{-2\theta_{0}\Delta_{i}})}
	\\
	&=&\frac{\rho_0}{(1+\rho_0^2)^{1/2}},
\end{eqnarray*}
Furthermore,
\begin{eqnarray*}	
	\mathbb{E}(Y_{i,n}^{2})&=&\frac{\mathbb{E}\left[(z_{1,i}-e^{-\theta_{0}\Delta_{i}}z_{1,i-1})^{2}
		(z_{2,i}-e^{-\theta_{0}\Delta_{i}}z_{2,i-1})^{2}\right] }{\left[\sigma_{01}\sigma_{02}(1+\rho_{0}^{2})^{1/2}
		(1-e^{-2\theta_{0}\Delta_{i}})\right]^{2}}\\
	&=&\frac{\sigma_{01}^{2}\sigma_{02}^{2}\left[1+2\rho_{0}^{2}+e^{-2\theta_{0}\Delta_{i}}(e^{-2\theta_{0}\Delta_{i}}-2)+2\rho_{0}^{2}e^{-2\theta_{0}\Delta_{i}}(e^{-2\theta_{0}\Delta_{i}}-2)  	 	
		\right]}
	{\left[\sigma_{01}\sigma_{02}(1+\rho_{0}^{2})^{1/2}
		(1-e^{-2\theta_{0}\Delta_{i}})\right]^{2}}\\
	&=&\frac{1+2\rho_{0}^{2}}{1+\rho_{0}^{2}},
\end{eqnarray*}
as is obtained by using Isserlis' theorem for correlated Gaussian random variables. 
Furtermore
\begin{eqnarray*}
	Var(Y_{i,n}) &=& \mathbb{E}(Y_{i,n}^{2})-[\mathbb{E}(Y_{i,n})]^{2}\\
	&=& \frac{1+2\rho_{0}^{2}}{1+\rho_0^2} - \left(\frac{\rho_0}{(1+\rho_0^2)^{1/2}} \right)^{2}\\
	&=& 1. 
\end{eqnarray*}
Hence $\mathbb{E}(|Y_{i,n}| \leq \sqrt{2})$ and so
\begin{eqnarray*}
	\mathbb{E} ( T ) &  \leq & \sqrt{2} 
	\sum_{i=2}^n \left|  
	D_{\theta,i} - \frac{\sigma_{01} \sigma_{02} \sqrt{1+\rho_0^2} \theta_0}{\theta^2}
	\right| \\
	& = &
	\sqrt{2}
	\sigma_{01} \sigma_{02} \sqrt{1+\rho_0^2}
	\sum_{i=2}^n 
	\left|
	\frac{
		(1-e^{-2 \theta_0 \Delta_i}) 2 \Delta_i e^{-2 \theta  \Delta_i}
	}
	{
		(1-e^{-2 \theta \Delta_i})^2
	}
	- \frac{\theta_0}{\theta^2}
	\right|. \\
\end{eqnarray*}
One can show, from a Taylor expansion and since $\theta \in \Theta$ with $\Theta$ compact in $(0,+ \infty)$, that
\[
\sup_{n \in \mathbb{N}, i=2,...,n} 
\sup_{\theta \in \Theta}
\frac{1}{\Delta_i}
\left|
\frac{
	(1-e^{-2 \theta_0 \Delta_i}) 2 \Delta_i e^{-2 \theta  \Delta_i}
}
{
	(1-e^{-2 \theta \Delta_i})^2
}
- \frac{\theta_0}{\theta^2}
\right|
< \infty.
\] 
Hence $\mathbb{E}(T) = O_u(1)$ and $T = 0_{up}(1)$. Hence, finally
\[
G = -\frac{\sigma_{01} \sigma_{02} \sqrt{1+\rho_0^2} \theta_0}{\theta^2}
\sum_{i=2}^n Y_{i,n}
+ O_{up}(1).
\]



 \section*{References}

 \bibliographystyle{elsarticle-num} 
 \bibliography{clasymf}





\end{document}